\documentclass[11pt]{amsart}
\usepackage[margin=3cm]{geometry}
\usepackage{amssymb, amsmath,mathtools}
\mathtoolsset{showonlyrefs,showmanualtags}
\usepackage{comment}
\usepackage{hyperref}
\usepackage{ytableau}

\newcommand{\gr}{\operatorname{gr}}
\newcommand{\Der}{\operatorname{Der}}
\newcommand{\Hom}{\operatorname{Hom}}
\newcommand{\Tor}{\operatorname{Tor}}
\newcommand{\End}{\operatorname{End}}

\newcommand{\prol}{\operatorname{prol}}
\newcommand{\m}{\mathfrak{m}}
\newcommand{\g}{\mathfrak{g}}

\newcommand{\vg}{\mathfrak v}
\newcommand{\ve}{\mathfrak{ve}}
\newcommand{\vf}{\mathfrak{vf}}
\newcommand{\sll}{\mathfrak{sl}}
\newcommand{\h}{\mathfrak{h}}
\newcommand{\eg}{\mathfrak{e}}
\newcommand{\fg}{\mathfrak{f}}
\newcommand{\R}{\mathbb{R}}
\newcommand{\bbC}{\mathbb{C}}
\newcommand{\Z}{\mathbb{Z}}
\newcommand{\dd}[1]{\frac{\partial}{\partial{#1}}}

\newcommand{\cL}{\mathcal L}
\newcommand{\cT}{\mathcal T}
\newcommand{\E}{\mathcal E}

\usepackage{todonotes}

\numberwithin{equation}{section}

\title[Finite type distributions via abnormal extremals]{Generalized pseudo-product structures and finite type distributions via abnormal extremals}

\thanks{ I.\ Zelenko was partly supported by NSF grant DMS 2105528.}

\author{Boris Doubrov}
\address{Belarussian State University, 
Nezavisimosti Ave.~4, Minsk 220030, Belarus;
 E-mail: boris.doubrov@gmail.com}

\author{Igor Zelenko}
\address{Igor Zelenko\\
         Department of Mathematics\\
         Texas A\&M University\\
         College Station\\
         Texas \ 77843\\
         USA}
\email{zelenkotamu@tamu.edu}
\urladdr{https://people.tamu.edu/~zelenkotamu/}

\newtheorem{thm}{Theorem}[section]

\newtheorem{lem}[thm]{Lemma}
\newtheorem{cor}[thm]{Corollary}
\newtheorem{prop}[thm]{Proposition}
\newtheorem{rem}[thm]{Remark}

\newtheorem{df}[thm]{Definition}
\newtheorem{ex}[thm]{Example}

\begin{document}
\subjclass[2020]{58A30, 58A17, 34C14, 34H05}
\keywords{distributions (subbubdles of tangent bundles), filtered structures, symmetries, Tanaka prolongation, absolute parallelism, abnormal extremals, geometry of ordinary differential equations}
\begin{abstract}
We generalize the classical Tanaka result on the finiteness of the Lie algebra of infinitesimal symmetries for non-degenerate pseudo-product structures to the case when the completely integrable distributions defining the pseudo-product structure are no longer concentrated in the degree $-1$.
In order to do this, we modify the notion of universal prolongation of graded nilpotent Lie algebras and generalize the original finiteness criterion of Tanaka. 
Using this result, we demonstrate that 
in real analytic category distributions that are controllable by concatenations of regular abnormal extremal trajectories, also known as singularly transitive, have finite-dimensional symmetries. This result settles Problem V in the affirmative from the 2013 list of open problems by Andrei Agrachev. Additionally, we discuss applications to symmetries and natural equivalence problems for systems of ODEs of mixed order.
\end{abstract}
\maketitle

\section{Introduction}
\subsection{Filtered manifolds and their symbol}

Following N.~Tanaka \cite{tan1970} and T.~Morimoto \cite{mori}, we say that a smooth manifold $M$ is \emph{filtered} if its tangent bundle is equipped with filtration:
\[
0 = T^0M \subset T^{-1}M \subset \dots \subset T^{-\mu}M=TM,
\]
compatible with a Lie bracket:
\[
[T^{-i}M, T^{-j}M] \subset T^{-i-j} M,\quad \text{for all }i,j=1,\dots,\mu. 
\]
Here we denote by $T^{-i}M$ the subbundle of the tangent bundle as well as its set of smooth sections, i.e., those vector fields $X$ on $M$ such that $X_p\subset T_p^{-i}M$ for all $p\in M$.

For each $p\in M$ denote by $\gr T_pM$ the graded vector space associated with the above filtration. It is naturally equipped with a structure of a graded Lie algebra:
\[
[X_p + T^{-i+1}M, Y_p + T^{-j+1}M] = [X,Y]_p + T^{-i-j+1}M,
\]  
for all $X\in T^{-i}M$, $Y\in T^{-j}M$. It is easy to see that this bracket depends only on the values of the vector fields $X$ and $Y$ at the point $p$. The graded Lie algebra $\gr T_pM$ is called a \emph{symbol} of the filtered manifold $M$ at a point $p$. If all Lie algebras $\gr T_pM$ are isomorphic to a fixed graded Lie algebra $\m$, we say that a filtered manifold $M$ \emph{has a constant symbol} or \emph{is of type $\m$}. Note that by definition, all Lie algebras $\gr T_pM$ are negatively graded and therefore nilpotent.

Any smooth manifold can be considered as a filtered manifold with trivial filtration $T^{-1}M=TM$. The symbol of such filtration at each point $p\in M$ is the whole tangent space $T_pM$ considered as a commutative Lie algebra with a trivial grading, when all elements have degree $-1$.

\begin{ex}\label{ex:d_filt}
 Let $D$ be an arbitrary bracket-generating distribution on $M$. We define the filtration on $M$ by the so-called weak derived flag of $D$:
\begin{align}
\label{week_flag}
T^{-1}M&= D;\\
 T^{-i-1}M &= T^{-i}M + [T^{-1}M,T^{-i}M],\quad i\ge 1.
\end{align}
 The symbol of such filtration is a very important invariant of the distribution $D$. However, it is not constant in general. 
\end{ex}

Note also that by definition, the Lie algebra $\gr T_pM$ in the above example is generated by elements of degree $-1$. This property plays a crucial role in the original Tanaka work, and he calls such graded nilpotent Lie algebras \emph{fundamental}. One of the goals of the present paper is to generalize Tanaka constructions to the case of non-fundamental  symbols. This is motivated by the following generalization of the above example.  

\begin{ex}\label{ex-filtered-D}
We say that a bracket-generating vector distribution $D$ on $M$ is filtered, if it is itself equipped with a filtration:
\[
0 = \mathcal \mathcal F^0D \subset \mathcal \mathcal F^{-1}D \subset \dots \subset \mathcal \mathcal F^{-\nu}D=D,  
\]  
compatible with a Lie bracket of vector fields:
\[
[\mathcal \mathcal F^{-i}D, \mathcal \mathcal F^{-j}D]\subset \mathcal \mathcal F^{-i-j}D,\quad\text{for all }i,j=1,\dots,\nu, i+j\le \nu.
\]
We can naturally extend this filtration to the filtration of the manifold $M$ as follows:
\begin{align*}
T^{-i}M &= \mathcal \mathcal F^{-i}D,\quad i=1,\dots,\nu;\\
T^{-i-1}M &= T^{-i}M + \sum_{r+s=i+1} [T^{-r}M, T^{-s}M],\quad i>\nu.
\end{align*}
The symbol of such filtration is no longer a fundamental graded nilpotent Lie algebra. However, it still has the property that $\gr T_pM$ is generated by elements of degree $-1,\dots,-\nu$. 
\end{ex}

The simplest example of filtrations with non-fundamental symbol is when each $T^{-i}M$ is a completely integrable distribution. Then $\gr T_pM$ is a commutative Lie algebra for each $p\in M$. However, it may still have a non-trivial grading. 

More examples of filtered manifolds with non-fundamental symbol will appear in the generalization of pseudo-product structures in Section~\ref{s:gpps} and its application to the controllability of non-holonomic distributions via abnormal extremals in Section~\ref{abnormal_sec}.

\subsection{Universal Tanaka prolongation}
Let $\m$ be a (negatively) graded nilpotent Lie algebra. Universal Tanaka prolongation is defined as the largest graded Lie algebra $\g(\m)$ that satisfies the following conditions:
\begin{enumerate}
    \item $\g_i(\m)=\m_i$ for all $i<0$;
    \item for any $u\in\g_i(\m)$, $i\ge 0$, the equality $[u,\m]=0$ implies $u=0$.
\end{enumerate}

Let $\Der_0(\m)$ be the Lie algebra of all degree preserving derivations of $\m$ and let $\g_0$ be its arbitrary subalgebra. Then Tanaka also defines $\g(\m,\g_0)$ by additionally assuming that $\g_0(\m,\g_0)=\g_0$ in the above definition.  

While the original Tanaka papers define this universal prolongation only for fundamental graded nilpotent Lie algebras, it is easy to see that the graded Lie algebras $\g(\m)$ and $\g(\m,\g_0)$ are well-defined without this assumption.

\subsection{Pseudo-product structures}
The notion of a pseudo-product structure was introduced by N.~Tanaka~\cite{tan1970} and generalizes the notion of an almost product structure in the classical differential geometry.

\begin{df}\label{df:pp_struct} 
A \emph{pseudo-product} structure on a smooth manifold $M$ is a pair $(E,F)$ of completely integrable distributions such that $E\cap F=0$ and $D=E+F$ is a bracket-generating distribution. 
\end{df}

Pseudo-product structures naturally appear in the geometric interpretation of ordinary differential equations.

Let $(E,F)$ be an arbitrary pseudo-product structure on a smooth manifold $M$. Let $D=E+F$ be the corresponding bracket- generating distribution. According to Example~\ref{ex:d_filt} it defines the filtration $\{T^{-i}M\}$ on $M$ such that $T^{-1}M=D$. Moreover, the pair $(E,F)$ can be viewed as a decomposition of $T^{-1}M$ into the sum of two subspaces: $T^{-1}M=E\oplus F$. 

The integrability of distributions $E$ and $F$ implies that the corresponding subspaces $E_p$ and $F_p$ in the symbol algebra $\gr T_pM$ are, in fact, commutative subalgebras. We shall call the triple $(\gr T_pM; E_p, F_p)$ the symbol of the pseudo-product structure at the point $p\in M$. 

Conversely, let $\m$ be an arbitrary graded fundamental Lie algebra (i.e., $\m$ is generated by $\m_{-1}$) and let $\m_{-1}=\eg\oplus \fg$ be the decomposition of $\m_{-1}$ into the sum of two commutative subalgebras. Then we can construct a \emph{standard} pseudo-product structure with the symbol $(\m; \eg, \fg)$ as follows. Let $M$ be a simply connected Lie group with a Lie algebra $\m$. Then we extend the subspaces $\eg$, $\eg$ from $T_eM = \m$ to the whole manifold $M$ by left shifts. It is easy to see that the resulting distributions $E$ and $F$ will be completely integrable and will define a pseudo-product structure of type $(\m;\eg,\fg)$ in $M$. 

For each point $p\in M$ we have a so-called \emph{Levi bracket}
\[
L\colon \wedge^2  T_p^{-1}M \to T_p^{-2}M/T_p^{-1}M,
\]
which is a part of the multiplication in the symbol $\gr T_pM$. It is clear that $L(E_p,E_p)=L(F_p,F_p)=0$. So, $L$ is completely determined by its restriction to $E_p\times F_p$. We shall denote the corresponding bilinear map:
\[
E_p \times F_p \to T^{-2}M/T^{-1}M
\]
by the same letter $L_p$.

\begin{df} The pseudo-product structure $(E,F)$ on a smooth manifold $M$ is called \emph{non-degenerate}, if the bilinear map $L\colon E_p\times F_p\to T^{-2}M/T^{-1}M$ has trivial left and right kernels for each $p\in M$.   
\end{df}
Equivalently, non-degeneracy of the pseudo-product structure $(E,F)$ means that if $X\in E$ ($Y\in F$) and $[X,F]\subset E+F$ ($[Y,E]\subset E+F]$), then $X=0$ (resp. $Y=0$).

The following results on the pseudo-product structures were proved in the papers of Tanaka~\cite{tan1970} and Yatsui~\cite{yatsui} and are presented here in a slightly modified form for completeness.
\begin{thm}[\cite{tan1970}]
Let $(E,F)$ be a non-degenerate pseudo-product structure on a smooth manifold $M$ of constant type $(\m;\eg,\fg)$. Let $\g_0$ be a subalgebra in $\Der_0(\m)$ preserving the decomposition $\g_{-1}=\eg\oplus\fg$.
\begin{enumerate}
\item The Tanaka prolongation $\g(\m, \g_0)$ is finite-dimensional.
\item The symmetry algebra of the pseudo-product structure $(E,F)$ is finite-dimensional. Moreover, its dimension does not exceed $\dim \g(\m,\g_0)$, and equality is achieved if and only if $(E,F)$ is locally equivalent to the standard pseudo-product structure of type $(\m,\g_0)$. 
\end{enumerate}
\end{thm}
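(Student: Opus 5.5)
Part (1) reduces to Tanaka's finiteness criterion: for a fundamental $\m$ and $\g_0\subset\Der_0(\m)$, the prolongation $\g(\m,\g_0)$ is finite-dimensional if and only if the algebraic prolongation of the linear Lie algebra $\g_0\oplus\m_{-1}$ (equivalently, of $\g_0\subset\mathfrak{gl}(\m_{-1})$) is of finite type. For us this means there is no nonzero element of rank one, i.e.\ no $A\in\g_0$ with $\dim A(\m_{-1})=1$ whose infinite prolongations are all nonzero.

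I would prove the stronger statement that the first algebraic prolongation already vanishes on such elements. Concretely, I will show directly that $\g_k(\m,\g_0)=0$ for $k\ge 2$, or at least that $\g_1$ restricted to $\m_{-1}$ is very constrained.

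Take $u\in\g_1$ with $k\ge1$. Since $u$ preserves $\eg$ and $\fg$ after bracketing, we have $[u,\eg]\subset\g_0$ and $[u,\fg]\subset\g_0$. We then bracket with the non-degenerate Levi form, using $L(\eg,\eg)=L(\fg,\fg)=0$.

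For $e,e'\in\eg$ the Jacobi identity gives
\[
[[u,e],e'] = [[u,e'],e].
\]
Here $[[u,e],e']\in\eg$ because $\g_0$ preserves $\eg$.

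For $e\in\eg$ and $f\in\fg$ it gives
\[
[u,L(e,f)] = [[u,e],f]+[e,[u,f]].
\]
The first term lies in $\fg$ and the second in $\eg$, so these components are determined separately.

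Applying the Jacobi identity to the $\eg$-component and using non-degeneracy (trivial right kernel of $L$ on $\eg\times\fg$), I would try to show the following: the tensor $(e,e')\mapsto[[u,e],e']$ is symmetric in $e,e'$ and, via the Levi form, also symmetric in its other slot. This is the usual argument that $\mathfrak{gl}(\eg)\oplus\mathfrak{gl}(\fg)$ acting on the non-degenerate pairing $\eg\otimes\fg$ has finite prolongation.

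The model to keep in mind is the case of the Lie algebra $\mathfrak{sl}(n+2)$ graded by contact projective structures. There the prolongation is finite, namely $\mathfrak{sl}$ of a bigger space, because each $\g_0$-element preserving the splitting is a pair $(A,B)\in\mathfrak{gl}(\eg)\times\mathfrak{gl}(\fg)$.

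The cleaner route is the rank-one criterion, and the key step is this. Suppose $A\in\g_0$ has rank one on $\m_{-1}$; since it preserves $\eg\oplus\fg$, its image is spanned by a vector $e_0\in\eg$ (or symmetrically in $\fg$), and $A$ kills $\fg$ and a hyperplane of $\eg$.

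The derivation property then gives $A L(e,f)=L(Ae,f)$ for $e\in\eg$, $f\in\fg$. I would then show by induction on degree that the infinite chain of prolongations $A^{(k)}$ of $A$ forces $L(e_0,\fg)$ to interact with $\m_{-2}$ in a way contradicting non-degeneracy. The main obstacle is exactly this step: controlling the higher brackets to rule out complex-rank-one elements of infinite type. I expect to handle it through Tanaka's complexification argument.

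Part (2) follows from (1) by the standard Tanaka–Morimoto machinery. The pseudo-product structure is a filtered $G_0$-structure of type $(\m,\g_0)$, with $G_0$ the stabilizer of the splitting. The first step is to construct the canonical absolute parallelism on the prolonged bundle of dimension $\dim\g(\m,\g_0)$. Infinitesimal symmetries lift to vector fields preserving this parallelism, so their algebra has dimension at most $\dim\g(\m,\g_0)$. Finally, equality forces all structure functions to be constant, hence zero curvature, which gives local equivalence to the flat standard model.
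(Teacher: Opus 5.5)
Your Part (1) has a genuine gap: you apply the rank-one criterion to the wrong linear Lie algebra. Tanaka's criterion is \emph{not} ``$\g(\m,\g_0)$ is finite-dimensional iff $\g_0\subset\mathfrak{gl}(\m_{-1})$ is of finite type''; that condition is only sufficient. The reduction the paper outlines passes to the subalgebra $\h\subset\g(\m,\g_0)$ with $\h_{-1}=\m_{-1}$ and $\h_j=\{u\in\g_j(\m,\g_0)\mid [u,\m_{-i}]=0 \text{ for } i\ge 2\}$, $j\ge 0$. Then $\h_k$ lies in the $k$-th Sternberg prolongation of $\h_0\subset\mathfrak{gl}(\m_{-1})$, and $\g(\m,\g_0)$ is finite-dimensional iff $\h$ is, i.e.\ iff $\h_0^{\bbC}$ has no rank-one elements.

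For pseudo-product structures $\g_0$ itself typically \emph{does} contain rank-one elements. Take the Heisenberg algebra $[e_1,e_2]=e_3$ with $\eg=\langle e_1\rangle$, $\fg=\langle e_2\rangle$. The derivation $e_1\mapsto e_1$, $e_2\mapsto 0$, $e_3\mapsto e_3$ preserves the splitting and has rank one on $\m_{-1}$. Moreover $\g_0|_{\m_{-1}}$, the diagonal matrices, has nonvanishing Sternberg prolongations of every order. Yet $\g(\m,\g_0)\cong\sll(3)$. So the plan of excluding rank-one elements of $\g_0$, or of showing that their prolongation chains die out, cannot work as set up. This is precisely why your identity $AL(e,f)=L(Ae,f)$ produces no contradiction, and the step you flag as the main obstacle stays open.

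The missing ingredient is the vanishing on $\m_{-2}=[\eg,\fg]$ built into $\h_0$. With it the argument is one line. If $A\in\h_0^{\bbC}$ has rank one, then because it preserves $\eg^{\bbC}\oplus\fg^{\bbC}$ you may assume $A|_{\fg^{\bbC}}=0$ and $\mathrm{im}\,A=\langle e_0\rangle\subset\eg^{\bbC}$. Choosing $e$ with $Ae=e_0$ gives $0=A[e,f]=[e_0,f]+[e,Af]=[e_0,f]$ for all $f\in\fg^{\bbC}$. So $e_0$ lies in the left kernel of the (complexified) Levi form, contradicting non-degeneracy. No induction over prolongations and no separate complexification argument are needed.

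Some of your intermediate claims are also false and should be dropped:
\begin{itemize}
\item $\g_k(\m,\g_0)=0$ for $k\ge 2$ already fails for $\sll(3)$. It also fails for $\sll(n+2)$ with its contact grading, which models Lagrangian contact rather than contact projective structures.
\item $\mathfrak{gl}(\eg)\oplus\mathfrak{gl}(\fg)\subset\mathfrak{gl}(\eg\oplus\fg)$ is of infinite type as a linear Lie algebra; finiteness comes only through $\m_{-2}$.
\end{itemize}

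Your Part (2) follows the standard Tanaka route, which is all the paper relies on there (by citation). In the equality case, constant structure functions only give a filtered Lie algebra whose associated graded is $\g(\m,\g_0)$. What makes it isomorphic to $\g(\m,\g_0)$, and hence gives flatness, is that the full stabilizer $\g_0$ of the splitting contains the grading element.
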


Let us outline its proof. It is based on several ideas. First, recall the classical notion of standard prolongation~\cite{stern64} of a subspace $A\in V^*\otimes W$. The \emph{$k$-th standard (Spencer--Sternberg) prolongation $A_k\subset S^{k+1}(V^*)\otimes W$} is defined inductively as:
\begin{align*}
A_0 &= A;\\
A_k &= \big(S^{k+1}(V^*)\otimes W\big) \cap \big(V^*\otimes A_{k-1}\big),\quad k\ge 1.
\end{align*}
Finally, set
\[
\prol(A) = \bigoplus_{i\ge 0} A_i.
\]

We say that $A$ is of finite type if $\prol(A)$ is finite-dimensional. This is equivalent to $A_k=0$ for sufficiently large $k$. It is well known~\cite{spencer} that $A$ is of finite type if and only if its complexification $A^{\bbC}$ contains no element of rank~$1$.

Next, Tanaka~\cite{tan1970} reduces the question of the finite dimensionality of $\g(\m,\g_0)$ for arbitrary subalgebra $\g_0\subset \Der(\m,\g_0)$ to the standard prolongation. That is, he defines a graded subalgebra $\h$ in $\g(\m,\g_0)$ as follows:
\begin{align*}
\h_{-i}&=0,\quad i\ge 2;\\
\h_{-1}&=\m_{-1};\\
\h_{j}&=\{u\in \g(\m,\g_0)\mid [u,\m_{-i}]=0\text{ for }i\ge 2\}.
\end{align*}
Then it is easy to see that $\h_k$ belongs to $k$-th prolongation of $\h_0$ for all $k\ge 1$, where $\h_0$ is viewed as a subspace in $\Hom(\m_{-1},\m_{-1})=\m_{-1}^*\otimes \m_{-1}$. 

The Tanaka finite-dimensionality criterion states that $\g(\m,\g_0)$ is finite-dimensional if and only if $\h$ is finite-dimensional. Using rank 1 criterion, we find that $\g(\m,\g_0)$ is finite-dimensional if and only if $\h_0^{\bbC}$ contains no rank~1 elements.

Finally, this criterion is applied to the subalgebra $\g_0\subset \Der_0(\m)$ consisting of all elements that preserve the decomposition $\m_{-1}=\eg\oplus \fg$. In this case, it is sufficient to note that any derivation in $\Der_0(\m)$ that preserves both $\eg$ and $\fg$ and vanishes on $\m_{-2}=[\eg,\fg]$ cannot be of rank 1 due to the non-degeneracy of the pseudo-product structure.

Even though the above theorem is already very strong and has a number of very important applications, it cannot be applied directly to the geometry of arbitrary non-holonomic distributions and to the related pseudo-product structures for the following reasons:
\begin{enumerate}
\item Generic non-holonomic distributions very rarely have a constant
symbol. And Tanaka theory cannot be applied directly to the case
when symbols $\gr T_p$ are not isomorphic at different points.
\item The condition that the symbol algebra $\m$ is generated by $\m_{-1}$ is crucial in the Tanaka criterion in the proof of the fact that $\dim \g(\m,\g_0)$ is finite-dimensional. However, this is not always the case, and we need to consider more general filtered manifolds with symbol, not necessarily generated  by $\m_{-1}$.
\item The above results cannot be applied to the degenerate pseudo-product structures. And this type of structure does appear when one uses abnormal extremals for the construction of absolute parallelism for non-holonomic distributions and the study of their symmetries.
\end{enumerate}   

The main purpose of the current paper is to address these limitations by generalizing the notion of pseudo-product structures and the Tanaka finiteness criterion. This allows us to apply these generalizations to control theory, specifically linking the controllability of non-holonomic distributions by abnormal extremal trajectories, referred to as singular transitivity in the terminology of \cite{agrachev}, to the finite dimensionality of their symmetry algebras (see Corollaries \ref{cor_control} and \ref{cor_min_rank} below). This result provides a positive answer to Problem V from Agrachev’s 2013 list of open problems \cite{agrachev}.

In the last Section we discuss applications to the geometry of systems of ODEs of mixed order. The case of two ODEs was treated in~\cite{mixed-order}. In this paper we extend these results to an arbitrary number of equations.

\section{Generalized pseudo-product structures}\label{s:gpps}
\subsection{Generalization of the universal Tanaka prolongation}
We shall use the following generalization of the universal Tanaka prolongation. Let $\eg_1, \dots, \eg_k$ be an arbitrary set of graded subalgebras in $\m$. Define \emph{the generalized Tanaka prolongation of $(\m; \eg_1, \dots, \eg_k)$} as the largest graded Lie algebra $\g=\g(\m; \eg_1, \dots, \eg_k)$ that satisfies the above two conditions for the universal Tanaka prolongation
\begin{enumerate}
    \item $\g_i(\m)=\m_i$ for all $i<0$;
    \item for any $u\in\g_i(\m)$, $i\ge 0$, the equality $[u,\m]=0$ implies $u=0$.
\end{enumerate}
and the additional condition
\begin{enumerate}
    \item[(3)] for any $u\in \g_i$, $i\ge 0$, and any subalgebra $\eg_l$, $1\le l\le k$ we have
    \[
    [u,\eg_l]\subset \eg_l + \sum_{j\ge 0}\g_j.
    \]
\end{enumerate}
In particular, the first two conditions imply that $\g$ is a subalgebra of $\g(\m)$.

An important example of this definition is the case where all subalgebras $\eg_i$ are concentrated in degree $-1$. Then they are automatically abelian, and $\g(\m; \eg_1, \dots, \eg_k)$ coincides with the original universal Tanaka prolongation $\g(\m,\g_0)$, where $\g_0$ is a subalgebra in $\Der_0(\m)$ that preserves all subspaces $\eg_i\subset \g_{-1}$.

\begin{ex}
    Let $\m$ be a 3-dimensional Heisenberg Lie algebra with basis elements $\{e_1,e_2,e_3\}$ of degrees $-1,-1,-2$ respectively and the only non-trivial bracket $[e_1,e_2]=e_3$. 

    Let $\eg=\langle e_1\rangle$ and $\fg=\langle e_2\rangle$ be two subalgebras concentrated in degree $-1$. It is well-known that the Tanaka prolongation $\g(\m;\eg,\fg)$ is isomorphic to $\sll(3)$ with $\m$ being its subalgebra of strictly upper-triangular matrices. 

    Now choose a different $\fg=\langle e_2, e_3\rangle$, which is no longer concentrated in degree $-1$. The generalized Tanaka prolongation in this case becomes a 6-dimensional subalgebra $\mathfrak{gl}(2)\rightthreetimes \R^2\subset \sll(3)$ that corresponds to the subgroup of affine transformations inside the group of projective transformations of the plane.
    
    This is the simplest example of the case where the generalized Tanaka prolongation differs from the classical definition.
\end{ex}

\subsection{Notion of generalized pseudo-product structures}
We generalize Definition~\ref{df:pp_struct} as follows. 

\begin{df} A \emph{generalized pseudo-product structure} on a filtered manifold $M$ is given by a pair of completely integrable distributions $(E,F)$ of $T^{-\nu}M$ such that the following conditions are satisfied:
\begin{enumerate}
\item $E\cap F = 0$;
\item $E\cap T^{-i}M + F\cap T^{-i}M = (E+F)\cap T^{-i}M$ for all $i>0$;
\item $E+F$ is bracket-generating.
\end{enumerate}
\end{df}

In applications, we usually assume that $E\oplus E = T^{-\nu}M$ for some $\nu>0$. If $\nu=1$, then in case we have $E\oplus F=T^{-1}M$, and the above definition reduces to Definition~\ref{df:pp_struct}. However, in general, $\m$ is no longer generated by $\m_{-1}$, but only by $\oplus_{i=1}^\nu \m_{-i}$.  

For each point $p\in M$ the symbol of a pseudo-product structure is defined as a triple $(\m;\eg,\fg)$, where $\m=\gr T_pM$ is a symbol of a filtered manifold $M$ at $p$, $\eg$ and $\fg$ are two graded subspaces in $\m$ given by:
\begin{align*}
\eg_{-i} &= E_p \cap T^{-i}M \mod T^{-i+1}M \subset \m_{-i};\\
\fg_{-i} &= F_p \cap T^{-i}M \mod T^{-i+1}M \subset \m_{-i},
\end{align*}
for all $i>0$.

As before, integrability of $E$ and $F$ implies that both $\eg$ and $\fg$ are graded subalgebras in $\m$. But they are not necessarily abelian, have trivial intersection, and their sum generates $\m$.

\begin{df}
\label{nondeg_def}
The pseudo-product structure is \emph{non-degenerate} if its symbol $(\m;\eg,\fg)$ satisfies the following condition: the bilinear map
\begin{equation}
\label{Leviform}
\eg \times \fg \to \m,\quad (x,y)\mapsto [x,y] \quad\text{for any }x\in\eg,y\in\fg,
\end{equation}
has trivial left and right kernels. 
\end{df}

As in the case of $E\oplus F=T^{-1}M$, this condition is equivalent to the following. If $X\in E$ ($Y\in F$) and $[X,F]\subset E+F$ ($[Y,E]\subset E+F$), then $X=0$ (resp. $Y=0$).

    The notion of pseudo-product structures turns out to be very useful in the study of \emph{degenerate} pseudo-product structures in the classical sense. Namely, let $E$ and $F$ be two completely integrable distributions on a smooth manifold $M$ such that $E\cap F=0$, $E$ is a filtered distribution
    $$E^{-1}\subset E^{-2}\subset \cdots \subset E^{-\mu}=E$$ such that 
    \begin{equation}
    \label{E_brackets_exact}
    E^{-i-1}=(E^{-1})^{i+1}=[E^{-1}, E^{-i}]+E^{-i}, \quad  i=1, \ldots \mu-1,
    \end{equation}
     and
    $D=E+F$ is bracket-generating. Obviously, by \eqref{E_brackets_exact}, bracket-genericity of $D$ is equivalent to braket genericity of $F+E^{-1}$.
    
 Let $F^{-1}:=F$. Refine further filtrations on $E$ and $F$ introducing:
    \begin{align}
        ~& E^{i}=\{ X\in E^{i-1} \mid [X, F]\subset E^{i-1}+F\}, \quad i\ge 0, \label{E_filt_down}\\
         ~& F^{i}=\{ Y\in F^{i-1} \mid [Y, E^{-1}]\subset  F^{i-1}+E^{-1}\}, \quad i\ge 0.\label{F_filt_down}   
    \end{align}
    Conditions \eqref{E_brackets_exact} together with the Jacobi identities imply that $[F^i, E^j]\subset F^{i+j}+E^j$ for any $j<0$ and any $i$.
    Restricting to an open subset, if needed, we can assume that for an open set $U \subset M$ and for every $i \geq 0$, $\dim  E^i_p$ and $\dim  F^i_p$ are constant for every $p \in M$, $i\geq 0$. 

\begin{rem}
\label{Int_rem}
Note that  by \eqref{E_filt_down}-\eqref{F_filt_down} integrability of $F$  implies that $F^i$ are integrable for any~$i\ge 0$.     
\end{rem}
\begin{df}
\label{Freeman}
    By analogy with terminology in the CR-geometry we will call filtrations \eqref{E_filt_down}-\eqref{F_filt_down} the \emph{Freeman filtrations} generated by the pair $(E, F)$. 
\end{df}

    \begin{ex}
\label{osculation}
    Suppose now that $E^0=0$ and $F^{\nu-1}\ne 0$, $F^{\nu}=0$ for some $\nu\ge0$. 
    Define a filtration in $D$ as follows:
    \begin{equation}
    \label{extended_filt}
        \mathcal \mathcal F^{-i}D = E^{-i}+F^{\nu-i},\quad i=1,\dots,\nu.
    \end{equation}
    Extend this filtration to the filtration on $M$ as described in Example~\ref{ex-filtered-D}. Then the pair $(E,F)$ defines a non-degenerate generalized pseudo-product structure on $M$ with the constructed filtration. 
    
    Note that if $\nu\ge 1$ (and $\mu=1$), then the pseudo-product structure $(E, F)$ is degenerate in the classical sense.
\end{ex}

\subsection{Generalization of Tanaka criterion to non-fundamental Lie algebras}
In this subsection we generalize Tanaka finiteness criterion to the case of non-fundamental graded nilpotent Lie algebra $\m$. 

Let $\m=\sum_{i=-\mu}^{-1}\m_i$ be an arbitrary finite-dimensional negatively graded nilpotent Lie algebra. Unlike Tanaka, we do not assume that $\m$ is fundamental (i.e., generated by elements of degree $-1$). 

Let $E=\sum_{i\in\Z} E_i$ be an arbitrary graded $\m$-module. All graded $\m$-modules considered in this subsection will satisfy the following conditions:
\begin{enumerate}
\item $\dim E_i < \infty$ for all $i\in\Z$;
\item $E$ is \emph{negatively bounded}, that is the negative part of $E$ is finite-dimensional. This is equivalent to $E_i=0$ for sufficiently small~$i$.  
\end{enumerate}

Following Tanaka, we say that the $\m$-module $E$ satisfies condition~(C), if the equality $\m x=0$ implies $x=0$ for all $x\in E_i$, $i\ge0$.  The examples of such modules are given by Tanaka prolongations $\g(\m,\g_0)$ for any $\g_0\subset \Der_0(\m)$.

Another important example of modules satisfying condition~(C) is the standard prolongation as exposed in \cite{gsst1967,spencer}. In this theory $\m=V$ is a finite dimensional vector space, which can be viewed as a graded commutative Lie algebra concentrated in degree $-1$. Due to condition~(C) $E_0$ can be identified with a subspace in $\Hom(V, E_{-1})$, and $E_i$ for $i>0$ can be viewed as subspaces in the standard (Sternberg) prolongation $E^{(i)}_0\subset \Hom(S^{i+1}V,E_{-1})$ of $E_0$. 

Let $\h=[\m,\m]$ be the derived sublgebra of $\m$. It is clear that $\h$ is also graded. Let $\nu$ be the largest integer $i$ such that $\h_{-i}\ne 0$. So, we have $\h=\displaystyle{\sum_{i=-\nu}^{-1}\h_i}$. Denote by $H(\m)$ the quotient $\m/\h$, which is a graded commutative Lie algebra. If $\m$ is fundamental, then $\h=\oplus_{i\ge 2}\m_{-i}$ and $\m/\h$ is isomorphic to $\m_{-1}$ as a vector space. However, in the general case of non-fundamental $\m$ the quotient algebra $H(\m)$ may have elements of lower degree. 

Let $H(E)$ be the subspace in $E$ consisting of all elements annihilated by~$\h$. Then $H(E)$ is a submodule of $E$. Moreover, we can naturally equip it with the structure of $H(\m)$-module. It is clear that $H(\m)$-module $H(E)$ does also satisfy condition~(C).

The theorem below is a generalization of the Tanaka finiteness criterion formulated in~\cite{tan1970}. We follow closely the preprint by I.~Naruki~\cite{naruki} for the proof. It is valid over an arbitrary base field $k$ of characteristic $0$ (which is either $\mathbb{R}$ or $\mathbb{C}$ in the scope of this paper).
\begin{thm}\label{E_H(E)_thm}
Let $E$ be a graded negatively bounded $\m$-module. Then $E$ is finite-dimensional if and only if the $H(\m)$-module $H(E)$ is also finite-dimensional.
\end{thm}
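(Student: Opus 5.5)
The implication ``$E$ finite-dimensional $\Rightarrow$ $H(E)$ finite-dimensional'' is trivial, because $H(E)\subset E$. For the converse I would argue by induction on $\dim\h$, removing one central element of $\m$ lying in $\h$ at a time, in the spirit of Naruki's proof of Tanaka's criterion. Two facts will be used throughout. First, since $\m$ is negatively graded, $\h=[\m,\m]\subset\bigoplus_{i\le -2}\m_i$. Second, every element of $\m$ lowers degree and $E$ is negatively bounded, so every element of $\m$ acts locally nilpotently on $E$.

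\textbf{Base case.} If $\h=0$, then $H(E)=E$ and there is nothing to prove.

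\textbf{Inductive step.} Suppose $\h\ne 0$. Let $\m=\m^{(1)}\supset\m^{(2)}=\h\supset\dots\supset\m^{(r)}\supset\m^{(r+1)}=0$ be the lower central series of $\m$. It is graded, and $r\ge 2$. Choose a nonzero homogeneous $z\in\m^{(r)}$, say of degree $-d$ with $d\ge 2$. Then $z$ is central and $z\in\h$.
\begin{itemize}
\item Since $z$ is central, $E':=\ker(z|_E)$ is a graded $\m$-submodule of $E$. It is therefore a module over $\m':=\m/\langle z\rangle$.
\item We have $[\m',\m']=\h/\langle z\rangle$, so $\dim[\m',\m']<\dim\h$.
\item An element of $E'$ is killed by $\h/\langle z\rangle$ exactly when it is killed by $\h$ (it is already killed by $z$). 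Hence $H_{\m'}(E')=H(E)$, which is finite-dimensional.
\item $E'$ is still negatively bounded, so by the inductive hypothesis $E'=\ker z$ is finite-dimensional.
\end{itemize}
Consequently there is $N_0$ such that $z\colon E_i\to E_{i-d}$ is injective for all $i\ge N_0$.

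The remaining task, and the step I expect to be the main obstacle, is to pass from ``$\ker z$ is finite-dimensional'' to ``$E$ is finite-dimensional''. Iterating the injectivity only gives $\dim E_i\le\max_{N_0-d\le j<N_0}\dim E_j$ for $i\ge N_0$, so the dimensions are bounded but not yet zero. Local nilpotence alone does not settle it either. For $0\ne x\in E_i$ some $z^jx\ne0$ lies in $\ker z$, but its degree $i-jd$ may be below $N_0$.

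Here the structure of $\m$ must enter. Write $z=\sum_s[a_s,b_s]$ with $a_s,b_s$ homogeneous in $\m$. I would try to reproduce Tanaka's argument for the Heisenberg-type relation: a degree-lowering operator $z$ that commutes with everything, is a sum of commutators of locally nilpotent degree-lowering operators, and is injective in high degrees, cannot be injective on arbitrarily high degrees. The plan is to apply it to the finite-dimensional truncations $E_{\ge p}/E_{\ge q}$ and compare traces or ranks of powers of $z$ there, in the form of a Weyl-algebra argument. If this does not work, the fallback is to impose condition~(C) on $E$, which holds in all the intended applications. We then use (C) in the form ``a nonzero $x\in E_i$, $i\ge0$, has $ax\ne0$ for some $a\in\m$''. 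Together with the injectivity of $z$ in high degrees and the centrality of $z$, this should let us push any nonzero high-degree element down into a nonzero element of $H(E)$ of high degree. The descent goes through elements of $\h$, each step lowering degree by between $2$ and $\nu$. Choosing the starting degree large compared with $\nu$ and with the top degree of $H(E)$ then gives $E_i=0$ for all sufficiently large $i$. Since $E$ is negatively bounded and each $E_i$ is finite-dimensional, this proves that $E$ is finite-dimensional.
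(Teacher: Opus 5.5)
\paragraph{Verdict: genuine gap.} Your reduction is correct and is essentially the paper's. You take a homogeneous central $z\in\h$; the paper uses $u\in\h_{-\nu}$, which is central for degree reasons, and your element from the last term of the lower central series works just as well. You identify $H_{\m/\langle z\rangle}(\ker z)=H(E)$ and conclude $\dim\ker z<\infty$ by induction. The gap is the step you flag yourself: getting from $\dim\ker z<\infty$ to $\dim E<\infty$. This is where the content of the theorem lies, and neither of your suggestions closes it.

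Traces over $k$ on finite-dimensional subquotients such as $E_{<q}/E_{<p}$ give nothing. There $z$ is nilpotent, so $\mathrm{tr}(z^j)=0$ for all $j\ge1$, which is perfectly consistent with $z$ being a sum of commutators. The fallback changes the hypotheses, since condition~(C) is not assumed in the theorem. Even granting (C), the descent does not control degrees. Applying elements of $\h$ to a high-degree $x\ne0$ does eventually give a nonzero element of $H(E)$ (Engel). But this takes an unbounded number of steps, so its degree can be arbitrarily low. This is the same obstruction you already noted for $z^jx$.

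\paragraph{The missing idea: take the trace over $k[z]$, not over $k$.} The paper passes to the graded dual $E^*$, which is positively bounded, and lets $k[X]$ act by $X\mapsto u$.
\begin{enumerate}
\item $E^*/uE^*\cong(\ker u)^*$ is finite-dimensional.
\item A graded Nakayama argument then shows $E^*$ is finitely generated over $k[X]$: a positively bounded graded $Q$ with $Q=uQ$ and $\deg u<0$ must vanish.
\item Hence $E^*\cong k[X]^r\oplus tE^*$, where the torsion part $tE^*$ is finite-dimensional.
\item Since $u$ is central, $\m$ acts $k[X]$-linearly on $E^*/tE^*\cong k[X]^r$. Writing $u=\sum_s[a_s,b_s]$, the matrix of $u$ over $k[X]$ has trace $0$. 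But $u$ acts as $X\cdot I_r$, whose trace is $rX$. So $r=0$ and $E^*$ is finite-dimensional.
\end{enumerate}

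\paragraph{Completing your own route.} Your Weyl-algebra intuition can be finished without dualizing.
\begin{enumerate}
\item Injectivity of $z\colon E_i\to E_{i-d}$ for $i\ge N_0$ makes $\dim E_i$ non-increasing along each residue class mod $d$. So $z$ is bijective in all degrees $i\ge N_1$.
\item Glue the pieces $E_i$, $i\ge N_1$, along these isomorphisms. The result is a graded $\m$-module that is free of rank $r=\sum_{N_1\le i<N_1+d}\dim E_i$ over $k[z,z^{-1}]$, and $\m$ acts on it $k[z^{\pm1}]$-linearly.
\item The same trace comparison forces $r=0$, that is, $E_i=0$ for $i\ge N_1$.
\end{enumerate}
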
  
\begin{proof}
If $E$ is finite-dimensional, then so is $H(E)$. So, we need only to prove the converse. Assume that $H(E)$ is finite-dimensional. The proof is done by induction by $\nu$ (the depth of the grading of $\h=[\m,\m]$), and by the dimension of $\h_{-\nu}$. The theorem is trivial for $\h=0$. So, we assume that $\nu\ge 2$ and $\h_{-\nu}\ne 0$. Note that $\h_{-\nu}$ lies in the center of $\m$.

 Let $u$ be an arbitrary non-zero element in $\h_{-\nu}$. Define $E'=\{a\in E\mid ua=0\}$. As an inductive step, assume $\dim E'<\infty$ and prove that this implies $\dim E<\infty$. 

Define the dual module $E^*=\oplus E_i^*$ with the natural action of $\m$. Note that it is also a graded module with $\deg E^*_i=-\deg E_i$. Note that $E^*$ is \emph{positively bounded}, that is $E^*_i=0$ for sufficiently large $i$.

Define the action of $k[X]$ on $E^*$ by assuming that $X\alpha=u\alpha$ for any $\alpha\in E^*$. 

\begin{lem}
The module $E^*$ is finitely generated over $k[X]$.     
\end{lem}
\begin{proof}
The condition $\dim E'<\infty$ is equivalent to $\dim E^*/uE^* < \infty$. Note that $uE^*$ is also graded. Choose an arbitrary graded subspace $S\subset E^*$ complementary to $uE^*$. Let $k[X]S$ be the $k[X]$-submodule of $E^*$ generated by $S$. As $k[X]S$ contains $S$, any element from $E^*/k[X]S$ has a representative of the form $u\alpha$ for some $\alpha \in E^*$. Hence, we have 
\begin{equation}\label{eq:u-act}
E^*/k[X]S=u(E^*/k[X]S).
\end{equation}
As the module $E^*/k[X]S$ is also graded and positively bounded and $u$ has strictly negative degree, the equality~\eqref{eq:u-act} is possible only if $E^*/k[X]S=0$. Since $S$ is finite-dimensional, this proves that $E^*=k[X]S$ is finitely generated over $k[X]$.
\end{proof}

To complete the proof, we use the structure theory of finitely generated modules over principal ideal domains. It states that $k[X]$-module $E^*$ is isomorphic to
\[
k[X]^r + \sum_{i=1}^s k[X]/(q_i)
\]
for some $r\ge 0$, $s\ge 0$ and a sequence of non-zero polynomials $q_i\in k[X]$. The second summand is the same as \emph{the torsion of $E^*$}. It can be defined as
\[
tE^* = \{\alpha \in E^* \mid p\alpha=0\text{ for some }0\ne p\in k[X]\}.
\]
It is clear that $tE^*$ is a submodule of the $k[X]$-module $E^*$. Moreover, as $u$ lies in the center of $\m$, the action of $k[X]$ commutes with the action of $\m$ on $E^*$. So, the quotient $E^*/tE^*= k[X]^r$ is equipped with both actions of $k[X]$ and $\m$.

Since $u$ also lies in $[\m,\m]$, it can be represented as 
\[
u = \sum_i [a_i,b_i],\quad a_i,b_i\in \m.
\]
The action of each element $a_i$, $b_i$ on $k[X]^r$ is given by a certain $r$ by $r$ matrix $A_i$, $B_i$ over $k[X]$. Hence, the action of $[a_i,b_i]$ is given by the matrix $[A_i,B_i]$, which has trace $0$. Thus, we get that the action of $u$ is also represented by the matrix with trace $0$. On the other hand, $u$ acts as a multiplication by $X$, so it has trace $rX$. This implies that $r=0$ and $E^* = tE^*$. Since each of the $k[X]$-modules $k[X]/(q)$, $q\ne 0$, is a finite-dimensional vector space over $k$, we see that $\dim E^*<\infty$. 
\end{proof}
\begin{cor}
    Let $\g(\m)$ be the universal Tanaka prolongation of $\m$ and let $\g$ be an arbitrary graded subalgebra in $\g(\m)$ such that $\g_{-}=\m$. Then $\g$ is finite-dimensional if and only if the centralizer $Z_{\g}([\m,\m])$ is finite-dimensional.
\end{cor}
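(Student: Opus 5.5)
The plan is to apply Theorem~\ref{E_H(E)_thm} with $E=\g$, regarded as an $\m$-module via the adjoint action restricted to $\m=\g_-\subset\g$. Since $\g$ is a graded subalgebra of $\g(\m)$ and each $\g_i(\m)$ is finite-dimensional, every $\g_i$ is finite-dimensional. Moreover, $\g_i=\m_i=0$ for $i<-\mu$, so $E$ is negatively bounded. Hence Theorem~\ref{E_H(E)_thm} applies and gives the equivalence
\[
\dim\g<\infty \iff \dim H(\g)<\infty .
\]
With the adjoint action we have $H(\g)=\{x\in\g\mid [\h,x]=0\}$, where $\h=[\m,\m]$. This is exactly the centralizer $Z_{\g}([\m,\m])$, and combining the two observations yields the corollary.

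The one point that needs justification is the finite-dimensionality of each graded piece $\g_i(\m)$, which the theorem requires as a standing hypothesis on modules. I would prove it by induction on $i\ge 0$. Condition~(2) in the definition of the universal prolongation says that $u\mapsto \operatorname{ad}u|_{\m}$ is injective on $\g_i(\m)$. The image lies in the space of degree-$i$ linear maps $\bigoplus_j \Hom(\m_j,\g_{j+i}(\m))$, where $j$ runs over $-\mu,\dots,-1$, so that $j+i<i$. By the induction hypothesis each target $\g_{j+i}(\m)$ is finite-dimensional, and therefore so is $\g_i(\m)$. For negative degrees the pieces are the $\m_i$, which are finite-dimensional by assumption.

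I expect no real obstacle. The only care needed is to confirm that the definition of $H(E)$ used in Theorem~\ref{E_H(E)_thm} (elements annihilated by $\h$) matches the centralizer under the adjoint action, and to note that the module structure on $\g$ uses only the bracket with $\m$, which is well defined because $\m\subset\g$.
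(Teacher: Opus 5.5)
Your proposal is correct and matches the paper's proof, which simply applies Theorem~\ref{E_H(E)_thm} to $E=\g$ with $\m$ acting by the adjoint action, so that $H(\g)=Z_{\g}([\m,\m])$. The paper leaves the theorem's standing hypotheses implicit, and you verify them explicitly: negative boundedness, and finite-dimensionality of each $\g_i(\m)$ via injectivity of $u\mapsto\operatorname{ad}u|_{\m}$ and induction on degree.
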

\begin{proof}
    Apply the theorem to the case $E=\g$.  
\end{proof}

Let $\g$ be a graded subalgebra in $\g(\m)$ with $\g_{-}=\m$. Set $V=\m/[\m,\m]$ be an abelian Lie algebra treated as a vector space.  Define $\g^0=\displaystyle{\bigoplus_{i\geq 0}\g_i}$. 

As in the Corollary, set $E=\g$ and $H(\g)=Z_{\g}([\m,\m])$. Then the adjoint action of $H(\g)^0=H(\g)\cap \g^0$ on $\g$ modulo $\g^0$ induces a linear subspace $A(\g)\subset \End(V)$. 
In other words:
\begin{equation}
\label{A_def}
A(\g) = \left\{ \bar u \colon V\to V:  \begin{array} {l}\exists\, u\in H(\g)^0 \text{ such that }  \forall\, x\in V\\ \bar u(x) = [u,x] \mod \g^0\end{array}\right\}.
\end{equation}
In fact $A(\g)$ is a subalgebra in $\mathfrak{gl}(V)$, but we will not use this in the proof.
\begin{thm}\label{H(E)_in_A_thm}
Let $\mathrm{prol}\bigl(A(\mathfrak g)\bigr)\subset S(V^*)\otimes V$ be the standard (Spencer-Sternberg) prolongation of $A$. 

Then
\begin{equation}
\label{H(E)prol(A)inclusion}
H(\g)^0 \subset \prol\bigl(A(\g)\bigr).   
\end{equation}
In particular, if $\mathrm{prol}(A)$ is finite-dimensional, then $\g$ is also finite-dimensional.
\end{thm}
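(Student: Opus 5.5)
We will embed $H(\g)^0$ into $\prol(A(\g))$ degree by degree. We then combine this with Theorem~\ref{E_H(E)_thm} (in the form of the Corollary) to get finiteness of $\g$.

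First we set up the gradings. The abelian quotient $V=\m/[\m,\m]=H(\m)$ is graded, possibly in several negative degrees. The submodule $H(\g)=Z_\g([\m,\m])$ carries the induced structure of an $H(\m)$-module: since $[\m,\m]$ annihilates $H(\g)$, the action $x\cdot u=[x,u]$ of $x\in\m$ on $u\in H(\g)$ depends only on the class of $x$ in $V$. By the Jacobi identity, $[[\m,\m],[x,u]]=[x,[[\m,\m],u]]+[[[\m,\m],x],u]$. The first term vanishes because $u\in H(\g)$. The second vanishes because $[[\m,\m],x]\subset[\m,\m]$ again kills $u$. Hence $[x,u]\in H(\g)$, and so $H(\g)$ is stable under $\m$. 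Condition~(2) of the universal prolongation gives condition~(C) for $H(\g)$ as an $H(\m)$-module, because $[\m,u]=0$ forces $u=0$ for $u\in\g^0$.

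Next we define the map. For $u\in H(\g)^0$ we define $\phi(u)\in\bigoplus_k S^{k+1}(V^*)\otimes V$ by iterated brackets. For $x_0,\dots,x_k\in V$, let $\phi(u)(x_0,\dots,x_k)$ be the class in $V$ of $[x_k,[\dots,[x_0,u]\dots]]$ projected to the negative part of $\g$, i.e. $\m$, and then modulo $[\m,\m]$. Here $k$ runs over all values for which the iterated bracket can land in negative degree. Symmetry in the $x_i$ follows because the $x_i$ act on $H(\g)$ through the abelian algebra $H(\m)$, so the operators $\mathrm{ad}\,x_i$ commute on $H(\g)$. Then we show that $\phi(u)$ lies in the prolongation, by induction on the number of arguments. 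For one step, $x\mapsto[x,u]$ sends $u$ to an element of $H(\g)$. If $[x,u]\in H(\g)^0$, then its further brackets are of the same form, so by induction $\phi([x,u])\in\prol(A(\g))$ at the lower level. If $[x,u]$ has negative degree, it is already an element of $\m$ and contributes directly. The base case is exactly the definition \eqref{A_def} of $A(\g)$: the one-argument piece of $\phi(v)$ for $v\in H(\g)^0$ is $\bar v\in A(\g)$. This yields $\phi(u)\in V^*\otimes\prol(A)_{\text{lower}}$ together with symmetry, which is precisely the defining condition of the Spencer--Sternberg prolongation.

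Finally we show injectivity, which justifies writing the inclusion \eqref{H(E)prol(A)inclusion}. This is the main obstacle. Suppose $\phi(u)=0$. Then every iterated bracket of $u$ by elements of $\m$ that lands in negative degree lies in $[\m,\m]$, modulo $\g^0$. We need to conclude $u=0$. Our plan is to use condition~(C) repeatedly together with the fact that $u$ commutes with $[\m,\m]$. Take an iterated bracket $w=[x_{k},\dots,[x_0,u]]$ of maximal length that is still in $\g^0$ and nonzero. Then $[x,w]\in\m$ for all $x\in\m$, and this value lies in $[\m,\m]$ by assumption. Since $w\in H(\g)$, the map $x\mapsto[x,w]$ is a map $V\to[\m,\m]$ that vanishes on $[\m,\m]$. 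We want to show it vanishes identically, which would force $w=0$ by condition~(C) and, descending, $u=0$. The delicate point is the grading: $V$ may have pieces in degrees below $-1$, so "landing in negative degree" must be handled degree by degree. We expect to need the $\m$-invariance of $H(\g)$ and the Jacobi identity to show that an element of $H(\g)$ whose brackets land in $[\m,\m]$ has brackets that are also centralized appropriately, forcing them into the center and then to $0$. If this fails in general, we will instead prove only the weaker statement that the kernel of $\phi$ is finite-dimensional. That suffices for the final claim.

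The concluding step is then immediate. If $\prol(A)$ is finite-dimensional, so is $H(\g)^0$, hence so is $H(\g)$ because its negative part lies in the finite-dimensional $\m$. The Corollary to Theorem~\ref{E_H(E)_thm} then gives $\dim\g<\infty$.
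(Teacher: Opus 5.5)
Your construction of $\phi$ and the check that $\phi(u)\in\prol(A(\g))$ are correct. $\phi$ is the same identification the paper uses; its splitting $u=u_0+\dots+u_{\bar i+1}$ is just the degree bookkeeping for this map. The genuine gap is the step you flag yourself. Neither injectivity of $\phi$ nor finite-dimensionality of $\ker\phi$ can be proved, because both are false once $V=\m/[\m,\m]$ has components of degree $\le -2$. Nothing prevents all negative-degree iterated brackets $[x_k,\dots,[x_0,u]]$ from lying in $[\m,\m]$. They always lie in $Z_\m([\m,\m])$, which meets $[\m,\m]$ nontrivially. Condition (C) only forces them to be nonzero, not nonzero modulo $[\m,\m]$. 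In particular, the claim in your sketch that $w\in H(\g)^0$ with $[\m,w]\subset[\m,\m]$ must vanish is wrong.

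\emph{Counterexample.} Let $\m=\langle e_1,e_2,e_3,f\rangle$ with $\deg e_1=\deg e_2=-1$, $\deg e_3=\deg f=-2$, and only nonzero bracket $[e_1,e_2]=e_3$. Let $\g=\m\oplus\bigoplus_{m\ge1}\R u_m$ with $\deg u_m=2m-2$, $[f,u_m]=m\,u_{m-1}$ (where $u_0:=e_3$), and all other brackets involving the $u_m$ equal to zero.
\begin{itemize}
\item This is a graded Lie algebra: take $e_1=\partial_x-\tfrac y2\partial_z$, $e_2=\partial_y+\tfrac x2\partial_z$, $e_3=\partial_z$, $f=\partial_t$, $u_m=t^m\partial_z$, with weights $1,1,2,2$ for $x,y,z,t$.
\item It is transitive because $[f,u_m]\neq 0$, so it is a graded subalgebra of $\g(\m)$ with $\g_-=\m$.
\item Since $e_3$ is central, $H(\g)=\g$ and $H(\g)^0=\bigoplus_{m\ge1}\R u_m$.
\item However, $[u_1,\m]=\R e_3\subset[\m,\m]$, and $[u_m,\m]\subset\g^0$ for $m\ge 2$.
\item Hence $A(\g)=0$, $\prol(A(\g))=0$ and $\ker\phi=H(\g)^0$, while $\g$ is infinite-dimensional.
\end{itemize}
Already $u_1$, i.e.\ the degree-$0$ derivation $f\mapsto -e_3$, refutes your sketched claim. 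So your fallback fails too. Both the inclusion and its ``in particular'' consequence fail as formulated, so no completion of your argument exists.

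The paper's proof does not supply the missing step either. It opens with ``$H(\g)_0\subset A(\g)$, since $\bar u$ can be taken to be equal to $u$'', which silently assumes that $u\mapsto\bar u$ is injective. Your argument does go through in Tanaka's fundamental setting. There $V=\m_{-1}$, all recorded values lie in $\m_{-1}$, which meets $[\m,\m]$ trivially, and iterating (C) gives injectivity. In general one would have to record values in $Z_\m([\m,\m])$ rather than in $V$, or impose an extra hypothesis.
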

\begin{proof}
{Set $A(\g)_0:=A(\g)$ and denote the $i$-th Spencer-Sternberg prolongation of $A(\g)$ by $A(\g)_i$.
Let us prove by induction that for every $i\geq 0$ we have the following splitting
\begin{equation}
\label{he_i}
H(\g)_i= \bigoplus_{k=0}^i H(\g)_i\cap A(\g)_k,
\end{equation}
which implies that
\begin{equation}
\label{he_i_inclusion}
H(\g)_i\subset \bigoplus_{k=0}^i A(\g)_k,
\end{equation}
and therefore \eqref{H(E)prol(A)inclusion}.

To begin the proof by induction of \eqref{he_i}, first,  if $u\in H(\g)_0$ then $u\in A(\g)$, since in this case $\bar u$ in \eqref{A_def}  can be taken to be equal to  $u$. In other words, $H(\g)_0\subset A(\g)$, i.e., \eqref{he_i} holds for $i=0$.

Now assume  that \eqref{he_i} holds for some $\bar i\geq 0$ and let $u\in H(\g)_{\bar i+1}$. Then, by definition,
\begin{equation}
\label{u_der}
[u(v_1), v_2]+[v_1, u(v_2)]=0.
\end{equation}
For any $0\leq i\leq \bar i$  and $0\leq j\leq i-1$ let $\mathrm{pr}_{i,j}: H(\g)_i\rightarrow H(\g)_i\cap A(\g)_j$ be the canonical projection with respect to the splitting \eqref{he_i}.
Decompose u into the following sum:
\begin{equation}
\label{u_decomp}
u=u_0+\dots +u_{\bar i+1},
\end{equation}
where
\begin{equation}
\label{u0}
u_0|_{_{V_{-\nu}\oplus\cdots\oplus V_{-\bar i-2}}}=
u|_{_{V_{-\nu}\oplus\cdots\oplus V_{-\bar i-2}}}, \quad  u_0|_{_{V_{-\bar i-1}\oplus\cdots\oplus V_{-1}}}=0;
\end{equation}
\begin{equation}
\label{uk}
\left\{\begin{array}{ll}
u_k|_{_{V_{-\nu}\oplus\cdots\oplus V_{-\bar i-3+k}}}=0,& u_k|_{V_j}=\mathrm{pr}_{j+\bar i+1, k-1}\circ u|_{_{V_j}}\\
~&~\\
\forall \,k \text{ and } j \text{ such that } 1\leq k\leq \bar i+1,&-\bar i-2+k\leq j\leq -1.
\end{array}\right.
\end{equation}
Let us prove that 
\begin{equation}
\label{u_k_inclusion}
u_k\in H(\g)_{\bar i+1}\cap A(\g)_k,\quad \forall \, k \text{ such that }0\leq k\leq \bar i+1,
\end{equation}
which will imply \eqref{he_i} for $i=\bar i+1$ and will complete the proof by induction

Note that \eqref{u_k_inclusion} implies that  for all $k$  such that $0\leq k\leq \bar i+1$ we have
\begin{equation}
\label{u_k_der}
[u_k(v_1), v_2]+[v_1, u_k(v_2)]=0.
\end{equation}

The proof of \eqref{u_k_inclusion} is  by induction on $k$:
For $k=0$, 
from \eqref{u0} it follows that $u_0(x)\equiv u(x)\,\,\mathrm{mod}\, \g^0(\m, \eg, \fg)$ for every $x\in V$, hence, by \eqref{A_def}, $u_0\in A(\g)_0$. Consequently, $u_0$ satisfies the relation \eqref{u_der} (with $u$ replaced by $u_0$) and being of degree $i$ with respect to $H(\g)$ it belongs to $H(\g)_{\bar i+1}$.

Now assume that for some $\bar k\geq 0 $, \eqref{u_k_inclusion} holds for all $k$ such that $0\leq k\leq \bar k$ and prove it for $k=\bar k+1$. Since from \eqref{u0} and \eqref{uk},  $u_{\bar k+1}$ is a degree $\bar k+1$ element of $\mathrm{Hom}(V,V\oplus\mathrm{prol}\bigl(A(\g)\bigr)$ with respect to the grading induced by the grading of $V\oplus\mathrm{prol}\bigl(A(\g)\bigr)$, with $V$ being of degree $-1$, to prove \eqref{u_k_inclusion}  for $k=\bar k+1$, it suffices to show that \eqref{u_k_der} is valid for $k=\bar k +1$.

Let 
\begin{equation}
\label{u^k}
u^k:=u_{k+1}+\ldots+ u_{\bar i +1}.
\end{equation}
By the induction hypothesis \eqref{u_k_der}  holds for all $0\leq k\leq k$. Since $u^{\bar k}=u-\sum_{s=0}^{\bar k} u_s$ and $u$ satisfy \eqref{u_der}, we have the following. 
\begin{equation}
\label{u^k_der}
[u^{\bar k}(v_1), v_2]+[v_1, u^{\bar k}(v_2)]=0.
\end{equation}
Consequently, since $u_{\bar k+1}=u^{\bar k}-u^{\bar k+1}$,  from \eqref{u^k_der} it follows that
\begin{equation}
\label{u_k+1_der}
[u_{\bar k+1}(v_1), v_2]+[v_1, u_{\bar k+1}(v_2)]=
-[u^{\bar k+1}(v_1), v_2]-[v_1, u^{\bar k+1}(v_2)].
\end{equation}

We now analyze the following three cases separately.

{\bf Case 1:} $v_1, v_2 \in V_{-\nu}\oplus\cdots\oplus V_{-\bar i-2+\bar k}$.
In this case, by the first relation in \eqref{u0},  $u_{\bar k+1}(v_1)=0$ and $u_{\bar k +1}(v_2)=0$. Hence,  relation \eqref{uk} holds trivially in the case $k=\bar k+1$ for all such $v_1$ and $v_2$.

{\bf Case 2:} $v_1 \in V_{-\nu}\oplus\cdots\oplus V_{-\bar i-2+\bar k}$ , $v_2 \in V_j$ for some $j$ such that $i-1+\bar k\leq j\leq -1$.}
In this case, by the first relation in \eqref{u0},  $u_{k}(v_1)=0$ for all $k$ such that $\bar k +1\leq k\leq \bar i+1$, so $u^{\bar k+1}(v_1)=0$. Therefore, substituting this into \eqref{u_k+1_der}, we get
\begin{equation}
\label{u_k+1_der_2}
[v_1, u_{\bar k+1}(v_2)]=
-[v_1, u^{\bar k+1}(v_2)].
\end{equation}
Since from \eqref{u0} and \eqref{uk}, $u_{\bar k+1}$ is a degree $k$ element of $\mathrm{Hom}(V,V\oplus\mathrm{prol}\bigl(A(\g)\bigr)$ with respect to the grading induced by the grading $V\oplus\mathrm{prol}\bigl(A(\g)\bigr)$, and the degree of $v_1$ and $v_2$ in this grading is $-1$, we conclude $[v_1, u_{\bar k+1}(v_2)]\in A(\g)_{\bar k-1}$ (here $A(\g)_{-1}:=V$). On the other hand, by the same arguments and also using \eqref{u^k} for $k=\bar k +1$, $[v_1, u^{\bar k+1}(v_2)]\in \displaystyle{\bigoplus_{k=\bar k}^{\bar i+1} A(\g)_k}$. Since $A(\g)_{\bar k-1}\cap \bigoplus_{k=\bar k}^{\bar i+1} A(\g)_k=0$, both sides of relation \eqref{u_k+1_der_2} must be zero.  In particular, $[v_1, u_{\bar k+1}(v_2)]$, which finally implies \eqref{u_k_der} for $k=\bar k+1$ for the $v_1$ and $V_2$.

{\bf Case 3: $v_1 \in V_{j_1}$ , $v_2 \in V_{j_2}$ for some $j_1$ and $j_2$ such that $i-1+\bar k\leq j_1, j_2\leq -1$}
In this case, by the same degree counting arguments as in the analysis of the previous case, in \eqref{u_k+1_der}, the left-hand side belongs to  $ A(\g)_{\bar k-1}$ and the right-hand side belongs to $\displaystyle{\bigoplus_{k=\bar k}^{\bar i+1} A(\g)_k}$. So, vanishing of the left-hand side, we get \eqref{u_k_der} for $k=\bar k+1$ for the considered $v_1$ and $V_2$.

To summarize all three cases, the \eqref{u_k_der} holds for $k=\bar k+1$ and completes the proof by induction of \eqref{u_k_inclusion}, which implies  \eqref{he_i} for $i=\bar i+1$. Consequently, we completed the proof by induction of \eqref{he_i} and the proof of the theorem.
\end{proof}

We note that due to Spencer criterion~\cite{spencer}, the space $\prol\bigl(A(\g)\bigr)$ is finite-dimensional if and only if the complexification $A(\g)^{\bbC}\subset \End(V^{\bbC})$ has no rank 1 endomorphisms. 

This result can be immediately applied to the case of generalized pseudo-product structures:
\begin{thm}\label{thm-pp-finite}
    Let $(\m;\eg,\fg)$ be the symbol of a non-degenerate generalized pseudo-product structure. Then the generalized Tanaka prolongation $\g(\m;\eg,\fg)$ is finite-dimensional.
\end{thm}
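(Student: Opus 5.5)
The plan is to apply Theorem~\ref{H(E)_in_A_thm} to $\g=\g(\m;\eg,\fg)$, which is a graded subalgebra of $\g(\m)$ with $\g_-=\m$. By that theorem and the Spencer criterion quoted after it, it suffices to show that the complexification $A(\g)^{\bbC}\subset\End(V^{\bbC})$, with $V=\m/[\m,\m]$, contains no endomorphisms of rank $1$. The generalized prolongation commutes with complexification: $\g(\m;\eg,\fg)\otimes\bbC=\g(\m^{\bbC};\eg^{\bbC},\fg^{\bbC})$, and non-degeneracy of the map \eqref{Leviform} is preserved. So we may work over $\bbC$ throughout and argue with an element $u\in H(\g)^0$ whose induced map $\bar u$ has rank $1$.

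The first step is to record what conditions (1)--(3) give for $u\in H(\g)_i$ with $i\ge 0$. Let $\bar\eg,\bar\fg\subset V$ be the images of $\eg$ and $\fg$. Since $\eg+\fg$ generates $\m$, we have $V=\bar\eg+\bar\fg$. Condition (3) says that for $x\in\eg$ the negative part $[u,x]_-$ lies in $\eg$, and similarly for $\fg$. Hence $\bar u$ preserves both $\bar\eg$ and $\bar\fg$. Moreover $u$ centralizes $[\m,\m]$, so for $x\in\eg$ and $y\in\fg$ the Jacobi identity gives
\[
0=[u,[x,y]]=[[u,x],y]+[x,[u,y]].
\]
Next, write $[u,x]=x'+p$ and $[u,y]=y'+q$, where $x'\in\eg$, $y'\in\fg$ and $p,q\in\g^0$. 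We will project this identity to the appropriate graded piece of $\m$. The terms $[p,y]$ and $[x,q]$ are the delicate ones. To control them we would work with the filtration of $H(\g)$ by degree, exactly as in the proof of Theorem~\ref{H(E)_in_A_thm}: the components of $u$ landing in $\g^0$ are of lower prolongation order. We would therefore argue by induction on the degree $i$ of $u$, reducing first to the leading part $\bar u$.

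The key step is the rank-one exclusion. Suppose $\bar u=\alpha\otimes w$ with $w\ne0$. Since $\bar u$ preserves $\bar\eg$ and $\bar\fg$, the vector $w$ lies in $\bar\eg$ or in $\bar\fg$ (or in both), and $\alpha$ vanishes on whichever of the two subspaces $w$ does not lie in. Say $w\in\bar\eg$, and choose $x_0\in\eg$ with $\alpha(x_0)\ne0$. We would then exploit the Jacobi identity above on pairs $(x,y)$ with $y$ ranging over $\fg$ and $x$ ranging over $\ker\alpha\cap\eg$ (where $\bar u$ vanishes). The aim is to show that a representative of $w$ in $\eg$ satisfies $[w,\fg]=0$ modulo the relevant terms. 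If $\dim\bar\eg\ge 2$, the kernel of $\alpha$ on $\bar\eg$ is large enough to force this, and we use a standard rank-one computation of the kind Tanaka uses for ordinary pseudo-product structures. This contradicts the triviality of the left kernel in Definition~\ref{nondeg_def}; the case $w\in\bar\fg$ is symmetric, using the right kernel.

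The main obstacle is that, unlike in Tanaka's setting, $\eg$ and $\fg$ are not concentrated in degree $-1$. As a result, $[u,x]$ for $u\in H(\g)_0$ can have components in several degrees, and $\bar u$ only sees $u$ modulo $[\m,\m]$ and modulo $\g^0$. Transferring "$[w,\fg]\subset$ lower terms" to an actual element of the kernel of \eqref{Leviform} requires care. I would first try to handle this by choosing $w$ and $x_0$ homogeneous, which is possible since $\bar u$ is graded. One then compares graded components degree by degree, using that $u$ commutes with $[\m,\m]$ to kill the $\g^0$ contributions. If this fails directly, the fallback is to replace the rank-one analysis by a direct proof that $H(\g)$ is finite-dimensional by descending induction on degrees.
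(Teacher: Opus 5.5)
\textbf{There is a genuine gap.} Your overall route is the paper's: apply Theorem~\ref{H(E)_in_A_thm} to $\g=\g(\m;\eg,\fg)$, invoke the Spencer criterion, and exclude rank-one elements of $A(\g)^{\bbC}$ using that $\bar u$ preserves $\bar\eg$ and $\bar\fg$. But you never carry out the decisive step, and the version you sketch pairs the wrong elements.

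If you pair $x\in\ker\alpha\cap\eg$ with $y\in\fg$, then $\bar u(\bar x)=\bar u(\bar y)=0$. The identity $[[u,x],y]+[x,[u,y]]=0$ then does not involve $w$ at all, so it cannot place $w$ in a kernel. Your hypothesis $\dim\bar\eg\ge 2$ is also unexplained, and it would exclude the basic cases: a classical pseudo-product with $\dim\eg=1$, or $\eg=\langle X\rangle$ in the ODE section.

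The paper pairs the other way. It picks $e\in\eg$ with $\bar u(\bar e)=w$ and lets $f$ range over all of $\fg$, where $\bar u$ vanishes because $\alpha|_{\bar\fg}=0$. From $0=[u,[e,f]]=[[u,e],f]+[e,[u,f]]$ it reads off $[w,f]=0$, so $w$ lies in the left kernel of \eqref{Leviform}.

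The difficulty you flag in your last paragraph is real, though, and comparing graded components does not remove it. The condition $\bar u(\bar f)=0$ only says $[u,f]\in([\m,\m]\cap\fg)\oplus\g^0$. So even the correct pairing gives only $[[u,e],f]=-[e,[u,f]]$, and the right side need not vanish. The paper's one-line computation $[\bar u(e),f]=\bar u([e,f])-[e,\bar u(f)]=0$ treats $\bar u(f)=0$ as $[u,f]=0$. That is legitimate only when $\eg,\fg\subset\m_{-1}$.

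In fact rank-one elements can occur. Take $\m$ with basis $x_1,x_2,b$ in degree $-1$, $g,f$ in degree $-2$, and $r,s$ in degree $-3$. Its only nonzero brackets are $[x_1,b]=g$, $[x_1,g]=r$, $[x_1,f]=s$, $[x_2,f]=-r$; this is a semidirect product of $\langle x_1,x_2\rangle$ with an abelian ideal. Set $\eg=\langle x_1,x_2\rangle$ and $\fg=\langle b,g,f\rangle$.
\begin{itemize}
\item The pair is non-degenerate.
\item Let $U$ be the derivation with $Ux_1=x_2$, $Uf=g$, and $U=0$ on the other basis vectors. It preserves $\eg$ and $\fg$ and kills $[\m,\m]=\langle g,r,s\rangle$, so $U\in H(\g)_0$.
\item Yet $\bar U=\bar x_1^*\otimes\bar x_2$ has rank one. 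Consistently, $[x_2,f]=-[x_1,Uf]=-r\neq 0$, so $x_2$ is not in the left kernel.
\end{itemize}

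So no refinement of the rank-one step can work in general. What is needed is your fallback: a direct proof that $H(\g)$ is finite-dimensional. In this example one checks that $H(\g)_k=0$ for $k\ge 2$, so the theorem itself survives. Your proposal does not supply such an argument.
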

\begin{proof}
By the Spencer criterion and Theorem~\ref{H(E)_in_A_thm} it is sufficient to show that the space $A(\g)^{\bbC}$ contains no rank $1$ elements.

Let $\bar \eg$ and $\bar\fg$ be the images of $\eg$ and $\fg$ under the canonical projection from $\mathfrak m$ to $V$.  Since $\eg\oplus\fg$ generates $\mathfrak m$,  it follows that $V=\bar\eg\bigoplus \bar\fg$. 
Assume by contradiction that $A(\g)$ contains an element $\bar u$ of rank $1$. Since by constructions $\bar u(\bar \eg)\subset \bar \eg$ and  $\bar u(\bar \fg)\subset \bar \fg$, $\mathrm{rank}\, \bar u=1$ imples that either $\bar u|_{\bar \eg}=0$ or $\bar u|_{\bar \fg}=0$.

Without loss of generality, assume that  
\begin{equation}
\label{u_to_f}
\bar u|_{\bar \fg}=0.
\end{equation}
 Then there exists a nonzero $w\in\bar \eg$ that generates the image of $\bar u$. In particular, there exists $e\in \eg$ such that $w=\bar u(e)$. Using the derivation property inherited from $E$, the definition of $H(E)$, and \eqref{u_to_f},  we get
$$\forall f\in \fg:\quad  [w,f]:=[\bar u(e), f]:=\underbrace{\bar u([e, f])}_0-[e,\underbrace{\bar u(f)}_0]=0.$$
Consequently, $w$ belongs to the left kernel of the form \eqref{Leviform} and so has to be zero due to the non-degeneracy assumption. This contradicts the assumption that 
$w\neq 0$, completing the proof.
\end{proof}

\section{Generalized pseudo-product structures are of finite type}
Let $(E,F)$ be a generalized pseudo-product structure on a filtered manifold $M$ and let $(\m_p;\eg_p,\fg_p)$ be its symbol at a point $p\in M$. According to Theorem~\ref{thm-pp-finite} its generalized Tanaka prolongation $\g(\m_p;\eg_p,\fg_p)$ is finite-dimensional. 

It is not difficult to show that for any $k\ge 0$ the function
\[
f_k\colon M\to \Z,\quad p\mapsto \dim\g_k(\m_p;\eg_p,\fg_p)
\]
is upper semi-continuous. Thus, possibly restricting $(E,F)$ to an open subset in $M$ we can assume that 
\begin{itemize}
    \item there exists $k_0\ge 0$ such that $\dim \g_k(\m_p;\eg_p,\fg_p)=0$ for all $k>k_0$ and all $p\in M$;
    \item for any $0\le k\le k_0$ the dimension $\g_k(\m_p;\eg_p,\fg_p)$ is constant among all $p\in M$. 
\end{itemize}
We shall say in such case that $\g(\m_p;\eg_p,\fg_p)$ are \emph{isomorphic as graded vector spaces} for all $p\in M$. 

The main result of this section is the following
\begin{thm}
\label{Finite_Dim_symmetry_Thm}
Let $(E,F)$ be a non-degenerate generalized pseudo-product structure on a smooth filtered manifold $M$. Assume that the generalized Tanaka prolongations $\g(\m_p;\eg_p,\fg_p)$ are isomorphic as graded vector spaces for all $p\in M$.  

Then there exists a sequence of bundles
\[
M\leftarrow P_0 \leftarrow \dots \leftarrow P_{k_0}=P
\]
of dimensions $\dim P_k =\sum_{i\le k}\g_{i}(\m;\eg,\fg)$ and an absolute parallelism on $P$ naturally associated with $(E,F)$. In particular, the symmetry algebra of $(E,F)$ is finite-dimensional and does not exceed $\dim \g(\m;\eg,\fg)$.
\end{thm}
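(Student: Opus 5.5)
The plan is to run Tanaka's prolongation procedure for filtered manifolds with variable but ``graded-vector-space constant'' symbol, in the form developed by Zelenko and Doubrov--Zelenko for structures of non-constant symbol. The generalized Tanaka prolongation $\g(\m_p;\eg_p,\fg_p)$ plays the role of the algebraic model. Finite dimensionality of the models is already supplied by Theorem~\ref{thm-pp-finite}, applied pointwise under the non-degeneracy assumption. The remaining work is geometric: build the bundles $P_k$ and the parallelism.

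First I will construct $P_0$. Over each $p$, let $P_0(p)$ be the set of graded isomorphisms $\varphi\colon \m_p\to \gr T_pM$ that send $\eg_p$ and $\fg_p$ to the subspaces induced by $E$ and $F$. Here I must fix, as in the non-constant symbol setting, a ``flat'' reference model. Since the symbols $\m_p$ need not be isomorphic, I will work instead with the tautological identifications of $\gr T_pM$ with itself. Each fiber is then a torsor over the group with Lie algebra $\g_0(\m_p;\eg_p,\fg_p)$. Because the dimension of $\g_0$ is constant, $P_0$ is a smooth bundle of dimension $\dim M+\dim \g_0$.

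Next I will build $P_k$ inductively. A point of $P_k$ over $\lambda\in P_{k-1}$ is a choice of complement (a ``normalization'') of the filtration of $T_\lambda P_{k-1}$ induced by the quasi-contact/soldering forms. Exactly as in Tanaka, such choices are defined modulo $\Hom$ of the appropriate graded degree. The freedom is cut down by imposing the normalization that the structure function lies in a fixed complement $N_k$ to the image of the Spencer-type differential. The residual freedom is then the kernel of that differential. I expect this kernel to be precisely $\g_k(\m_p;\eg_p,\fg_p)$.

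The crux will be showing that condition (3) in the definition of the generalized prolongation is exactly what survives. That is, I need the kernel to consist of degree-$k$ elements $u$ with $[u,\m]\subset \g_{<k}$, with $u$ satisfying the Jacobi/derivation condition, and with $[u,\eg]\subset \eg+\g^0$ and $[u,\fg]\subset\fg+\g^0$. The last two conditions must come from the integrability of $E$ and $F$ and from their lifts to $P_{k-1}$ being preserved. This is the place where non-fundamental symbols differ from Tanaka's setting. The natural liftings of $E$ and $F$ to each $P_k$ must be tracked as distributions whose symbol contains $\eg$ (resp.\ $\fg$) plus the vertical part, and I expect this bookkeeping to be the main obstacle. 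To overcome it, I will use the equivalent description of the generalized prolongation as a subalgebra of $\g(\m)$ cut out by (3), and I will set up the induction so that the vertical part at step $k$ is defined as the stabilizer of the lifted $E$ and $F$.

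Constancy of $\dim\g_k$ ensures that each $P_k\to P_{k-1}$ is a smooth affine bundle with fiber dimension $\dim\g_k$. For $k>k_0$ we have $\g_k=0$, so $P_{k_0+1}\cong P_{k_0}=P$. At $P$ the chosen normalizations assemble into a canonical splitting of $TP$, namely a coframe with values in $\bigoplus_{i\le k_0}\g_i$, which is an absolute parallelism. Naturality of every step means that any symmetry of $(E,F)$ lifts to $P$ and preserves the parallelism. Symmetries of a parallelism on a connected manifold are determined by their value and image at one point. Hence the symmetry algebra embeds into $T_\lambda P$, which gives the bound $\dim\le\dim P=\dim\g(\m;\eg,\fg)$.
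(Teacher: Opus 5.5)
Your outline of the prolongation itself follows the Tanaka--Zelenko scheme that the paper also invokes: normalize by a complement to the image of a Spencer-type differential, take the residual freedom to be $\g_k$, stop at $k_0$, and use that a symmetry of a parallelism is determined at one point. The step where you handle the non-constant symbol, however, does not work.

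If $P_0(p)$ consists of graded automorphisms of $(\gr T_pM;\eg_p,\fg_p)$ via the tautological identification, then $P_0\to M$ is a bundle of groups with a canonical identity section. Nothing is soldered to a fixed model. The soldering form, the structure functions, the normalization spaces and the final coframe all take values in spaces such as $\gr T_pM$ and $\g_i(\m_p;\eg_p,\fg_p)$ that move with $p$.

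This breaks naturality, which is exactly what you use to lift symmetries to $P$. For the later steps to be natural, the complements $N_k(p)$ would have to be carried onto each other by every isomorphism of symbols $\gr T_pM\to\gr T_{p'}M$ induced by a local symmetry. In particular, each $N_k(p)$ would have to be invariant under $\mathrm{Aut}(\m_p;\eg_p,\fg_p)$. That group is in general not reductive, so such complements need not exist. Your setup also gives no way to coordinate choices at distinct points with isomorphic but non-identical symbols. Finally, a splitting of $TP$ with values in a varying bundle is not an absolute parallelism, so the argument that a symmetry is determined by the image of one point does not apply as stated. You noted yourself that a reference model must be fixed; the tautological identification avoids that need rather than meeting it.

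The missing ingredient is a smooth normal form for the symbols. The paper fixes a graded vector space $\vg$ with subspaces $\ve,\vf\subset\vg_{-}$. It takes the variety $\cL$ of graded Lie structures on $\vg$ for which $\ve,\vf$ are subalgebras of $\vg_{-}$ and $\vg$ is the full generalized prolongation. It then applies Rosenlicht's theorem, in the form of Popov--Vinberg, to the action of the group $G$ of grading-preserving automorphisms of $\vg$ that preserve $\ve$ and $\vf$. This yields a Zariski open $B\subset\cT/G$ with a quasi-section $\sigma\colon B\to\cT$. After shrinking $M$, each symbol is isomorphic to exactly one $\sigma(\beta(p))$, with $\beta\colon M\to B$ smooth.

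With this in place, frames are isomorphisms from the fixed model $\sigma(\beta(p))$, and normalization complements are chosen on the model side, depending smoothly on $b$. The parallelism takes values in the fixed space $\vg$, and naturality holds because $\beta$ is intrinsic. The construction of \cite{hh24} for such $\beta$-structures then produces the tower.

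A secondary point: $\m_p$ is generated only by degrees $-1,\dots,-\nu$, so the fundamental-case torsion spaces, which are controlled by $\m_{-1}$, are too small. The paper enlarges them to $\Hom_{n+1}(\vg_{-}\wedge\vg_{-},\vg)\oplus\Hom(\oplus_{i=0}^{n-1}\vg_{-}\wedge \vg_{i},\vg_{n-1})$. Your claim that the residual freedom is exactly $\g_k(\m_p;\eg_p,\fg_p)$, including the compatibility with $\eg_p$ and $\fg_p$, is stated as an expectation rather than derived.
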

\begin{proof}
    We follow the constructions and terminology of~\cite[Sections 3--5]{hh24}, based in turn on \cite{hm24, zeltan}, with minor modifications. 
    
    Fix a graded vector space $\vg$ isomorphic to $\g(\m_p;\eg_p,\fg_p)$ for all $p\in M$ and two graded subspaces $\ve,\vf\subset \vg_{-}$ such that $\ve_i$ and $\vf_i$ are complementary and have the same dimensions as $\eg_i$, $\fg_i$ for all $i<0$.
    
    Consider the algebraic variety $\cL$ of all graded Lie algebra structures $\g$ on $\vg$ such that
    \begin{enumerate}
        \item $\g_{-}$ is a graded Lie algebra;
        \item $\ve$ and $\vf$ are subalgebras of $\g_{-}$;
        \item $\g=\g(\vg_{-},\ve,\vf)$.
    \end{enumerate}
    It is clear that $\cL$ can be viewed as an algebraic subvariety the vector space $\Hom_0(\wedge^2\vg,\vg)$ of degree presevring skew-symmetric products on $\vg$.

    Let $G$ be an algebraic subgroup of $GL(\vg)$ that preserves the grading and both the subspaces $\ve$ and $\vf$. It acts naturally on $\cL$. Let $\cT$ be the Zariski closure of the orbit space containing all orbits that correspond to the generalized Tanaka prolongations $\g(\m_p;\eg_p,\fg_p)$ of symbols $(\m_p;\eg_p,\fg_p)$ at different points $p\in M$.
    
    Using the standard results from algebraic geometry (Rosenlicht~\cite{rosen1956}, Popov--Vinberg~\cite[\S4]{popov-vinberg}) we know that the space $\cT/G$ can be equipped with the structure of an algebraic variety such that the projection $\cT\to \cT/G$ is rational. This implies that there exist a Zariski open subset $B\subset \cT/G$ and a quasi-section $\sigma \colon B\to \cT$ such that $\g(\m_p;\eg_p,\fg_p)$ is isomorphic (as graded Lie algebra) to a finite number of $\sigma(b)$, $b\in B$ for an open subset of points $p\in M$. Restricting to an open subset  in $B$ (in the induced smooth topology), we can assume that $\sigma(b)$ is unique. Again, restricting the pseudo-product structure to this subset if necessary, we assume that this holds for all points $p\in M$.

    We then proceed with the constructions of~\cite[Sections 3--5]{hh24} with the following modifications:
    \begin{enumerate}
        \item we work in the category of smooth manifolds;
        \item the model vector bundle $\vg\times B$ is equipped with two distinguished subbundles $\ve\times B$ and $\vf\times B$, such that a vector bundle homomorphism
        \[
        \Lambda \left(\wedge^2\vg\right)\times B \to \vg\times B
        \]
        turns $\vg$ for each $b\in B$ into a graded Lie algebra denoted by $\g(b)$ with two subalgebras $\eg(b)$ and $\fg(b)$, and, moreover, $\g(b)$ coincides with the generalized Tanaka prolongation $\g\left(\g(b)_{-},\eg(b),\fg(b)\right)$;
        \item as we do not assume that $\g(b)_{-}$ is fundamental, we define $\Tor_{n+1}(\vg)$ for all $n\ge 0$ as:
        \[
        \Tor_{n+1}=\Hom_{n+1}(\vg_{-}\wedge\vg_{-},\vg)\oplus \Hom(\oplus_{i=0}^{n-1}\vg_{-}\wedge \vg_{i},\vg_{n-1}).   
        \]
    \end{enumerate}

    Given the above setup, let $M$ be a smooth manifold $M$ equipped with a submersion $\beta\colon M\to B$. We define a \emph{generalized $\beta$-pseudo-product structure on $M$} as a generalized pseudo-product structure whose symbol at each point $p\in M$ is isomorphic to $(\g(b);\eg(b),\fg(b))$ for $b=\beta(p)$.

    Then we apply the same argument as in the proofs of Theorem 3.4 and its Corollary 3.6 from \cite{hh24} to prove that there is a sequence of bundles: 
    \[
        M\leftarrow P_0 \leftarrow \dots \leftarrow P_{k_0}=P
    \]
    an absolute parallelism on $P$ naturally associated with the generalized $\beta$-pseudo-product structure on $M$. 
\end{proof}

\section{Finite type distributions via abnormal extremals}
\label{abnormal_sec}
In this section, we apply the developed theory to obtain very general conditions for finite dimensionality of symmetry group of bracket-generating distributions in terms of properties of their abnormal extremals.

\subsection{Pseudo-product structure via symplectificaiton.} 
Let $D$ be a distribution on a manifold $M$.
In general, $D$  is not equipped with a natural pseudo-product structure. However, the lift of $D$ to a submanifold of the cotangent bundle $T^* M$, naturally assigned to $D$, does possess this property. We also refer to the process of passing to the cotangent bundle and working with the natural geometric structures induced there by the original distribution as \emph{symplectification} \cite{BI09, BI16, Doubrov_Day_Zelenko}. 

In more details, let $\{T^{-i}M\}_{i>0}$ be the weak derived flag of $D$, as in \eqref{week_flag}. In this case  $T^{-i}M$ is called the \emph{$i$th power} of the distribution and usually denoted by $D^i$.

To a distribution $D$ on a manifold $M$, assign a new (filtered) distribution on a submanifold of the cotangent bundle of $M$ equipped with the intrinsic generalized pseudo-product structure. 

To begin, let
$$T^*M=\{(p,q): q\in M, p\in T_q^*M\}$$
denote the cotangent bundle of $M$, $\pi: T^*M\rightarrow M$ be the canonical projection, $s$ be the tautological (Liouville) $1$-form \footnote{if $\lambda\in T^*M$, $\lambda:=(p,q): q\in M, p\in T_q^*M$, and $Y\in T_\lambda T^*M$, then  $s_\lambda (Y):=p\bigl(d \pi_\lambda(Y)\bigr)$} and  $\sigma=d\, s$ be the canonical symplectic form on $T^*M$.
The dual objects to the $j$-th power $D^j$ of $D$  on $T^*M$ is the annihilator $(D^j)^\perp$, defined as follows
$$(D^j)^\perp=\{(p,q)\in T^*M:p(v)=0\quad\forall\, v\in D^j(q)\}.$$


The following objects, which arise in optimal control theory (more specifically, when applying the Pontryagin Maximum Principle to any optimal control problem on the space of curves tangent to the distribution $D$), are central to this section:

\begin{df}
An absolutely continuous nontrivial \footnote{ nontrivial means that its image is not a point} curve $\gamma:[0, T]\mapsto D^\perp$ is called \emph{abmormal extremal} of $D$ if $\gamma'(t)\in  \ker \sigma(\gamma(t))|_{D^\perp}$ for almost every $t$. The  projection of $\gamma$  to $M$ is  called \emph{abnormal extremal trajectories} of $D$. 
\end{df}

By construction, abnormal extremals must lie in the following subset of $D^\perp$: 
\[
\widetilde W_D=\{\lambda \in D^\perp: \ker \sigma(\lambda)|_{D^\perp}\neq 0\}.
\]

Since the codimension of $D^\perp$ in  $T^*M$ is equal to $\mathrm{rank} \,D$, from the elementary properties of skew-symmetric forms it follows that
\begin{equation}
\label{tildeWDdescription}
 \widetilde W_D=\begin{cases}D^\perp& \text{if } \mathrm{rank} \,D\text { is odd,}\\ \{\lambda\in D^\perp:\Bigl (\bigwedge ^{\dim M-\mathrm{rank} \,D/2}(\sigma|_{ D^\perp})\Bigr)(\lambda)=0\} &  \text{if } \mathrm{rank} \,D\text { is even.}
\end{cases}
\end{equation}

Let $W_D$ be equal to $D^\perp$ when $\mathrm{rank}\,D$ is odd. If $\mathrm{rank}\,D$ is even, let $W_D$ be the subset of $\widetilde{W}_D$ consisting of points such that, in some neighborhood, the portion of $W_D$ within that neighborhood is a codimension-1 embedded submanifold of $D^\perp$.
For any
$\lambda\in W_D$  define
\begin{equation}
E(\lambda)=  \ker \sigma(\lambda)|_{T_\lambda  W_D}. 
\end{equation}
$E$ is called the \emph{characteristic distribution} on $W_D$. Since by construction $W_D$ is odd-dimensional, $E$ is a distribution of positive odd rank  in a neighborhood of a generic point, and it is integrable there. 
Define
\begin{equation}
\mathcal A(\lambda)=  \ker \sigma(\lambda)|_{D^\perp}\cap T_\lambda  W_D.
\end{equation}
$\mathcal A$ is called the \emph{abnormal distribution} of $D$. Obviously, horizontal curves of the abnormal distribution $\mathcal A$ are \emph{abnormal extremals}\footnote{In general, these do not constitute all abnormal extremals, as they potentially exclude those containing points in $\widetilde W_D \setminus W_D$.}   of the distribution $D$.

By constructions, 
\begin{equation}
\label{abn+_char _inclusion}    
\mathcal A\subset E,
\end{equation}
Using elementary properties of skew-symmetric forms it is easy to prove the following 

\begin{lem}
\label{AE_lemma}
Assume that $\lambda\in W_D$. Then  
\begin{enumerate}
\item $\mathcal A(\lambda)= E(\lambda)$ if and only if  $\ker \sigma(\lambda)|_{D^\perp}$\ is transversal to  $T_\lambda  W_D$ in $T_\lambda D^\perp$;
\item If $\mathcal A(\lambda)\subsetneqq E(\lambda)$, then $\dim E(\lambda)-\dim \mathcal A(\lambda)=1$.
\end{enumerate}
In particular, 
$\mathcal A= E$
if $\mathrm{rank}\, D$ is odd or $\mathrm{rank} \, \mathcal A=1$. 
\end{lem}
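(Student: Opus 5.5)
The argument is pure linear algebra of a skew form restricted to subspaces. Fix $\lambda\in W_D$ and set $U=T_\lambda D^\perp$, $\omega=\sigma(\lambda)|_U$, $K=\ker\omega$ and $H=T_\lambda W_D\subset U$. Then $\mathcal A(\lambda)=K\cap H$ and $E(\lambda)=\ker(\omega|_H)$. Clearly $K\cap H\subset \ker(\omega|_H)$. When $\mathrm{rank}\,D$ is odd we have $W_D=D^\perp$ and $H=U$, so both spaces equal $K$ and everything is trivial. So the interesting case is $\mathrm{rank}\,D$ even, where $H$ is a hyperplane in $U$ ($\mathrm{codim}\,1$).

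The key tool is the standard fact that for a hyperplane $H=\ker\alpha\subset U$, with $\alpha\in U^*$, we have
\[
\ker(\omega|_H)=\{h\in H:\ \omega(h,\cdot)\in \R\alpha\}.
\]
Let $\varphi\colon U\to U^*$, $h\mapsto \omega(h,\cdot)$. Then $\ker(\omega|_H)=H\cap\varphi^{-1}(\R\alpha)$. Since $\varphi^{-1}(\R\alpha)$ contains $K$ with codimension at most $1$ (it equals $K$ if $\alpha\notin \mathrm{Im}\,\varphi$), we get
\[
\dim E(\lambda)-\dim\mathcal A(\lambda)\le 1.
\]
This gives item (2) directly: if the inclusion is strict, the difference is exactly $1$.

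For item (1), first suppose $K\not\subset H$, which is what transversality means for a hyperplane. Then $\alpha|_K\neq0$. Take $h\in\ker(\omega|_H)$ with $\omega(h,\cdot)=c\alpha$. Evaluating on $k\in K$ gives $\omega(h,k)=0=c\,\alpha(k)$. Choosing $k$ with $\alpha(k)\ne0$ forces $c=0$, so $h\in K$, and hence $E=\mathcal A$.

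Conversely, suppose $K\subset H$. Since $\alpha$ annihilates $K$, and $\mathrm{Im}\,\varphi=K^\perp$, we have $\alpha=\omega(v,\cdot)$ for some $v$. Then $v\notin K$, and $\omega(v,v)=0$ gives $\alpha(v)=0$, so $v\in H$. Thus $v\in\ker(\omega|_H)\setminus K$, and $\mathcal A\subsetneqq E$.

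For the final clause, the odd rank case was handled above. If $\mathrm{rank}\,\mathcal A=1$, use a parity count. With $\mathrm{rank}\,D$ even, $U$ has even dimension $2\dim M-\mathrm{rank}\,D$, so $\dim K$ is even. The form $\omega|_H$ lives on the odd-dimensional space $H$, so $\dim E$ is odd.
\begin{itemize}
\item If $K\subset H$, then $\mathcal A=K$ has even dimension, so it cannot be $1$. Hence in the rank-one case $K\not\subset H$, and item (1) gives $E=\mathcal A$.
\item Alternatively: if $E\ne\mathcal A$ then $\dim E=2$, which contradicts oddness.
\end{itemize}

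The main point needing care is correctly identifying the parities and the image of $\varphi$. Everything else is routine.
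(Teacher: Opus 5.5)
Your proof is correct and is exactly the ``elementary properties of skew-symmetric forms'' argument the paper invokes; the paper states the lemma without writing out a proof. Your description $E(\lambda)=H\cap\varphi^{-1}(\R\alpha)$ gives (2), the dichotomy $K\not\subset H$ versus $K\subset H$ (using $\mathrm{Im}\,\varphi=K^\perp$) gives (1), and the parity count on the even-dimensional $T_\lambda D^\perp$ and odd-dimensional $T_\lambda W_D$ gives the final clause.
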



\begin{df}Let 
\begin{equation}
\label{MD_eq}
m_D:=\min\{\dim\, \mathcal A(\lambda): \lambda \in W_D\}
\end{equation}
and \[\mathcal R_D^{\mathrm{reg}}=\{\lambda\in  W_D: \dim \, {\mathcal A}(\lambda)=m_D\}.\] 
An abnormal extremal lying in ${\mathcal R}_D^{\mathrm{reg}}$ are called \emph{regular} and its projection to $M$ is called \emph{abnormal regular extremal trajecotriy}.
\end{df}

We consider the following filtrations on the distribution $E$
\begin{enumerate}
    \item If $\mathcal{A} = E$, then the filtration on $E$  is trivial: $E^{-1}:= E$.
    \item If $\mathcal{A} \subsetneqq E$ but $\mathcal A^2 \subsetneqq E$ (the latter inclusion is equivalent to the fact that $\mathcal A$ is involutive), then the filtration on $E$  is trivial: $E^{-1}:= E$.
    \item If $\mathcal{A} \subsetneqq E$ and \begin{equation}
\label{A^2}
\mathcal{A}^2 = E,
\end{equation}  then we set $E^{-1} := \mathcal{A}$ and $E^{-2} := E$.
\end{enumerate}
Note that in the case where $\mathcal{A} = E$, the relation \eqref{A^2} holds automatically, since $E$ is involutive, whereas for $\mathcal{A} \subsetneqq E$, this is an additional assumption.
Further, let $F$ be the vertical distribution of the bundle $W_D
\stackrel{\pi}{\rightarrow} M$ consisting of tangent spaces to the fibers. Obviously, $F$ is an integrable distribution. 
Apply the procedure of Example \ref{osculation}: let $E^i$ and $F^i$ be the decreasing filtration on $E$ and $F$ as defined in \eqref{E_filt_down} and \eqref{F_filt_down}, respectively.

\begin{thm}
\label{thm_control}
With the filtration on  $E$ according to items (1)-(3) above, if $E^0=0$ on an open set of $W_D$ and the distribution $E+F$ is bracket-generating, then the original distribution $D$ is of finite type.   
\end{thm}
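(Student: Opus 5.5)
The plan is to reduce the statement to Theorem~\ref{Finite_Dim_symmetry_Thm}, applied to the pair $(E,F)$ on (an open subset of) $W_D$. The bridge between $D$ and $W_D$ is functoriality: every local symmetry $\varphi$ of $D$ lifts to $T^*M$ via $\lambda\mapsto (d\varphi^{-1})^*\lambda$. This lift is a symplectomorphism preserving $D^\perp$. Hence it preserves $\widetilde W_D$, $W_D$, the characteristic distribution $E$, and the abnormal distribution $\mathcal A$, and therefore the filtration on $E$ chosen in items (1)--(3). It also preserves the vertical distribution $F$, since it covers $\varphi$. The lifts then preserve the Freeman filtrations \eqref{E_filt_down}--\eqref{F_filt_down} and the filtration \eqref{extended_filt}.

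The map from infinitesimal symmetries of $D$ to infinitesimal symmetries of $(E,F)$ on $W_D$ is injective. Indeed, the lift of a vector field $X$ on $M$ is the Hamiltonian field of $h_X(\lambda)=\lambda(X)$. If it vanished on an open subset of $W_D$, its projection $X$ would vanish on the open set $\pi(W_D\cap U)\subset M$. By real analyticity, or simply by restricting the whole discussion to $\pi$ of that open set, this gives $X=0$ there. Consequently, finite-dimensionality of the symmetry algebra of $(E,F)$ on an open subset of $W_D$, with dimension bounded by a fixed number independent of the open set, yields finite type of $D$.

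Next I verify the hypotheses of Theorem~\ref{Finite_Dim_symmetry_Thm}, following Example~\ref{osculation}. In cases (1) and (2) the filtration on $E$ is trivial, so \eqref{E_brackets_exact} holds vacuously with $\mu=1$. In case (3), \eqref{E_brackets_exact} is exactly the assumption \eqref{A^2}. The distribution $E$ is integrable on an open dense set, as noted, and $F$ is integrable as the vertical distribution; by Remark~\ref{Int_rem} the $F^i$ are integrable too. I restrict to an open set where all ranks of $E^i$ and $F^i$ are constant. Since $E^0=0$ by hypothesis and the $F^i$ strictly decrease while the ranks are finite, some $\nu$ exists with $F^\nu=0$ and $F^{\nu-1}\neq 0$. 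Example~\ref{osculation} then says that $(E,F)$ with the filtration \eqref{extended_filt} is a non-degenerate generalized pseudo-product structure, with bracket generation supplied by the hypothesis.

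The main obstacle is checking that example claim carefully, that is, the non-degeneracy of the symbol pairing $\eg\times\fg\to\m$. For $x\in\eg_{-i}$ with $[x,\fg]=0$ in the symbol, one unwinds the grading. An element of $E^{-i}\setminus E^{-i+1}$ bracketing $F$ into lower filtration would lie in $E^{-i+1}$ by \eqref{E_filt_down}, and iterating forces it into $E^0=0$. The analogous argument for $F$ uses $F^\nu=0$ together with the inclusion $[F^i,E^j]\subset F^{i+j}+E^j$. I also need condition (2) of the definition, the compatibility of intersections, which is immediate from \eqref{extended_filt} being a sum of filtrations of $E$ and $F$.

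Finally, Theorem~\ref{thm-pp-finite} gives finite-dimensionality of each $\g(\m_p;\eg_p,\fg_p)$. Upper semicontinuity lets me shrink to an open set where these prolongations are isomorphic as graded vector spaces. Theorem~\ref{Finite_Dim_symmetry_Thm} then provides an absolute parallelism and a uniform bound, which completes the argument.
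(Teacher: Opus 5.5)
\textbf{Gap: the Freeman filtration of $F$ need not reach zero.} Lifting symmetries of $D$ to $W_D$ and reducing to Theorem~\ref{Finite_Dim_symmetry_Thm} is the right framework. The proof breaks at the step ``the $F^i$ strictly decrease while the ranks are finite, so some $\nu$ exists with $F^\nu=0$.'' Nothing forces this. By \eqref{F_filt_down}, once $F^{i}=F^{i-1}$ the sequence is constant from then on, so it only stabilizes at some $F^\nu$.

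In this setting $F^\nu$ is never zero. $W_D$ is conic, and fiberwise homotheties preserve $D^\perp$ and $W_D$ while rescaling $\sigma$. Hence they preserve $E$ and $\mathcal A$. The Euler field $e$ is vertical, so $e\in F$, and it satisfies $[e,E^{-1}]\subset E^{-1}$. Therefore $e$ lies in every $F^i$.

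Consequently $(E,F)$ on $W_D$ itself is degenerate: $e\neq 0$ lies in $F$ and $[e,E]\subset E+F$. Example~\ref{osculation} does not apply there, and your non-degeneracy argument on the $F$ side fails. The hypothesis $E^0=0$ does not help, since it only controls the left kernel.

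\textbf{The missing step, which is what the paper does.} Take $\nu$ minimal with $F^{\nu+1}=F^\nu$.
\begin{enumerate}
\item $F^\nu$ is integrable by Remark~\ref{Int_rem}.
\item Stabilization gives $[F^\nu,E^{-1}]\subset F^\nu+E^{-1}$, and the Jacobi identity extends this to $E$. So $E$, $F$ and their filtrations descend to the local leaf space $\widetilde U=U/F^\nu$.
\item On $\widetilde U$ one has $\widetilde E^0=0$ and $\widetilde F^\nu=0$. The pushed-forward filtrations are the Freeman filtrations of $(\widetilde E,\widetilde F)$, so Example~\ref{osculation} and then Theorem~\ref{Finite_Dim_symmetry_Thm} apply on $\widetilde U$, not on $W_D$.
\end{enumerate}

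Your injectivity argument must then be redone on $\widetilde U$. It still works because $F^\nu\subset F$ is vertical. Hence $\pi$ factors through the quotient map, and a symmetry of $D$ acting trivially on $\widetilde U$ projects to the identity on $M$.
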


\begin{proof}
For generic subset $\mathcal R_D$ of $W_D$, for any $\lambda\in \mathcal R_D$ dimensions of $E^i$ and $F^i$ are locally constant in a neighborhood $U$ of $\lambda$. By the assumption on $E^0$, $E^0=0$ in this neighborhood.
Restricting to such a neighborhood, let $\nu$ be the \emph{minimal} integer such that 
$F^{\nu+1}=
F^\nu$. By Remark \ref{Int_rem} the distribution $F^\nu$ is integrable.  Let $\widetilde U$ be the quotient space of $U$ by foliation of integral submanifolds  of $F_\nu$ in $U$,let $\Phi: U\rightarrow \widetilde U$ be the canonical projection, and let $\widetilde E:= \Phi_* E$, $\widetilde F:= \Phi_* F$, and, more generally,
\begin{equation}
\label{tilde_quotient}
\widetilde F^i=\Phi_* F^i, \widetilde E^i=\Phi_* E^i.
\end{equation}

By assumption $\widetilde E^0=0$ and by construction  
$\widetilde F^\nu=0$. Besides, the filtrations $0=\widetilde E^0\subset \widetilde E^{-1}=\widetilde E$ and $0=\widetilde F^{\nu}\subset \widetilde F^{\nu-1}\subset\cdots\subset \widetilde  F^{-1}=\widetilde F $  are the Freeman filtrations of the pair $(\widetilde E, \widetilde F)$ and the distibution $\widetilde E+\widetilde F$ is bracket-generating on $\widetilde U$. Therefore, as in Example \ref{osculation}, the pair $(\widetilde E,\widetilde F)$ defines a non-degenerate generalized pseudo-product structure on $M$ with the filtration defined as in \eqref{extended_filt} and therefore by Theorem \ref{Finite_Dim_symmetry_Thm} it has a finite-dimensional symmetry group.

Finally, because the construction of the pseudo-product structure $(\widetilde E, \widetilde F)$ from the original distribution $D$ is intrinsic, every symmetry of $D$ induces the corresponding symmetry of the pair $(\widetilde E, \widetilde F)$. Since distinct symmetries of $D$ yield distinct symmetries of $(\widetilde E, \widetilde F)$, it follows that the symmetry group of $D$ is finite-dimensional.
\end{proof}
\begin{rem} 
One can use the trivial filtration on $E$ in Theorem \ref{thm_control} also in the case where \eqref{A^2} holds. However, $E^0$ for the trivial filtration contains (and, in general, is strictly larger than) $E^0$ for the more refined filtration of item 3. Consequently, the vanishing of the former implies the vanishing of the latter, but not vice versa. Therefore, using the more refined filtration for item 3 in Theorem \ref{thm_control} yields a stronger result in this case.
\end{rem}



\begin{df}
A distribution $D$ on a connected manifold $M$ is called \emph{controllable by regular abnormal extremal trajectories} if any two generic points of $M$ can be connected by a (finite) concatenation of regular abnormal extremal trajectories.
\end{df}
As a consequence of Chow-Rashevskii theorem \cite{chow, rashevskii}, a distribution is controllable by regular abnormal extremals if $F+\mathcal A$ is bracket-generating; in the real analytic category, this condition is both necessary and sufficient. Furthermore, by\eqref{abn+_char _inclusion} bracket-genericity of $F+\mathcal A$ implies that $F+E$ is also bracket-generating. Therefore, Theorem \ref{thm_control} yields the following 
\begin{cor}
\label{cor_control}
With the filtration on  $E$ according to items (1)-(3) above,  if $E^0 = 0$ on an open subset of $W_D$ and the distribution $\mathcal{A} + F$ is bracket-generating, then the original distribution $D$ is of finite 
 type. Moreover, in the real analytic category, a distribution $D$ with $E^0 = 0$ is of finite type if it is controllable by abnormal extremal trajectories.   
\end{cor}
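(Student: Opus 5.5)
The corollary follows from Theorem~\ref{thm_control}. We only need to check that its hypotheses hold under the stated assumptions, both in the smooth and in the real analytic setting.

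For the first claim, assume $E^0=0$ on an open subset of $W_D$ and that $\mathcal A+F$ is bracket-generating. By the inclusion \eqref{abn+_char _inclusion}, $\mathcal A\subset E$, so every iterated bracket of sections of $\mathcal A+F$ is an iterated bracket of sections of $E+F$. Hence the weak derived flag of $E+F$ dominates that of $\mathcal A+F$ at every step. In particular, $E+F$ is bracket-generating on the open set where $E^0=0$, after shrinking to a neighbourhood where all the Freeman filtrations \eqref{E_filt_down}--\eqref{F_filt_down} have constant rank. Both hypotheses of Theorem~\ref{thm_control} then hold, so $D$ is of finite type.

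For the second claim, we work in the real analytic category and assume $D$ is controllable by (regular) abnormal extremal trajectories. The aim is to show that $\mathcal A+F$ is bracket-generating on some open subset of $W_D$, ideally of $\mathcal R_D^{\mathrm{reg}}$, where the ranks of $\mathcal A$, $E$, $F$ are constant; then the first part applies. Suppose instead that $\mathcal A+F$ is not bracket-generating on such a set. By analyticity, the terminal power $(\mathcal A+F)^N$ is a proper involutive analytic distribution of constant rank on an open dense subset, and by the Nagano–Sussmann orbit theorem its orbits are proper immersed submanifolds. Since $F$ is the vertical distribution of $\pi\colon W_D\to M$, every orbit is saturated by fibers. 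Therefore it projects to a subset of $M$ of dimension $\dim(\text{orbit})-\dim F<\dim M$.

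Every concatenation of regular abnormal extremal trajectories lifts to a concatenation of horizontal curves of $\mathcal A$, joined by jumps along fibers of $\pi$ (these are tangent to $F$). Such a curve stays in one orbit of $\mathcal A+F$. Consequently, points of $M$ reachable from a generic point lie in a set of positive codimension, which contradicts controllability. Thus $\mathcal A+F$ is bracket-generating, and the first part gives finite type, provided $E^0=0$.

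The main obstacle is the lifting step. A regular abnormal extremal trajectory is the projection of a curve in $\mathcal R_D^{\mathrm{reg}}\cap W_D$, but its lift may leave the open set where the ranks are constant and where $E^0=0$. I would handle this using analyticity. On an open dense subset the ranks are constant and the vanishing of $E^0$ propagates, so generic pairs of points can be connected through concatenations whose lifts lie in this good set, up to arbitrarily small perturbations. This density argument, together with the orbit-theorem version of Chow–Rashevskii quoted just before the corollary, is where I expect the real work to be. The remaining steps are formal consequences of \eqref{abn+_char _inclusion} and Theorem~\ref{thm_control}.
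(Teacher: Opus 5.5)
Your first part is the paper's argument: since $\mathcal A\subset E$, bracket-generation of $\mathcal A+F$ passes to $E+F$, and Theorem~\ref{thm_control} applies. For the analytic statement the paper also reduces to the first part. It asserts without proof, citing Chow--Rashevskii, that in the analytic category controllability by regular abnormal extremal trajectories forces $\mathcal A+F$ to be bracket-generating. You try to prove this via the orbit theorem, and the argument breaks at the junction step.

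To conclude that the lifted concatenation ``stays in one orbit of $\mathcal A+F$'', the endpoint of one lift and the initial point of the next must lie in the same orbit. These two covectors lie over the same point $q\in M$ but are otherwise unrelated. A jump between them tangent to $F$ exists only if they lie in the same connected component of $\pi^{-1}(q)\cap\mathcal R_D^{\mathrm{reg}}$, and nothing forces this. Once a concatenation can switch orbits at every junction, the reachable set is a union of projections of uncountably many orbits, and the codimension argument collapses.

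This really happens. Take $D=D_1\oplus D_2$ on $M_1\times M_2$ with $D_1,D_2$ two $(2,3,5)$-distributions, and let $\pi_2\colon M_1\times M_2\to M_2$ be the projection.
\begin{itemize}
\item Off their intersection, $W_D$ is the union of $(D_1^2)^\perp\times D_2^\perp$ and $D_1^\perp\times(D_2^2)^\perp$.
\item On both pieces, at generic points, $\dim\mathcal A=1=m_D$, $\mathcal A=E$, and $E^0=0$.
\item On the first piece every field in $\mathcal A+F$ projects into $TM_1$, so its orbits lie in level sets of $\pi_2\circ\pi$. The same holds symmetrically on the second piece.
\end{itemize}
Hence $\mathcal A+F$ is bracket-generating on no open subset of $W_D$. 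Yet alternating concatenations (abnormal trajectories of $D_1$, then of $D_2$) join any two points of $M$, because each $(2,3,5)$-distribution is controllable by its own abnormal extremals. Here $D$ is still of finite type, by Tanaka's criterion applied to its symbol (a direct sum of two $(2,3,5)$ symbols), but this does not follow from Theorem~\ref{thm_control}.

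So the implication you are proving, and the paper's one-line justification of it, need an extra hypothesis. One option is that the fibers of $\mathcal R_D^{\mathrm{reg}}\to M$ are connected; another is that the connecting concatenations lift inside a single connected component of $\mathcal R_D^{\mathrm{reg}}$. Under such a hypothesis your orbit-theorem argument is essentially complete.

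The obstacle you single out, by contrast, is largely handled by the definitions. Regular abnormal extremals lie in $\mathcal R_D^{\mathrm{reg}}$ by definition, and there $\mathcal A$ has constant rank $m_D$, so the orbit theorem can be run on $\mathcal R_D^{\mathrm{reg}}$ itself. The ranks of $E$ and of the Freeman filtrations, and the condition $E^0=0$, enter only afterwards, when you invoke the first part. What is needed then is that the open set where $E^0=0$ meets the open dense set where $\mathcal A+F$ is bracket-generating. Analyticity gives this when both sit in the same connected component of $\mathcal R_D^{\mathrm{reg}}$. The perturbation argument you sketch is therefore unnecessary. As stated it would not work anyway: a perturbed concatenation of abnormal extremals is in general no longer a concatenation of abnormal extremals.
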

Finally note that if $m_D=1$, were $m_D$ is defined by \eqref{MD_eq}, then by the last sentence of Lemma \ref{AE_lemma}, $\mathcal A=E$, so $E$ is of rank $1$ at a generic point (or generic abnormal extremals of $D$  are of \emph{minimal order} in the terminology of \cite{chitour}). In this case $E^0=0$ automatically, so we have the following

\begin{cor}
\label{cor_min_rank}
 If $m_D=1$ (i.e., generic abnormal extremals are of minimal order)  and  the distribution $F+\mathcal{A}$ is bracket-generating, then the original distribution $D$ is of finite type. Moreover, in the real analytic category, a distribution $D$ with $m_D=1$ is of finite type if it is controllable by abnormal extremal trajectories.   
\end{cor}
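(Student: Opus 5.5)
The plan is to reduce Corollary~\ref{cor_min_rank} to Corollary~\ref{cor_control} (equivalently, to Theorem~\ref{thm_control}). So what has to be checked is that $m_D=1$ puts us in one of the filtration cases (1)--(3) with $E^0=0$ on an open subset of $W_D$.

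First, I restrict to the open set $\mathcal R_D^{\mathrm{reg}}$ of regular points. On this set $\dim\mathcal A(\lambda)=m_D=1$. It is open because $\dim\mathcal A$ is upper semicontinuous and $m_D$ is its minimum. The last sentence of Lemma~\ref{AE_lemma} gives $\mathcal A=E$ there, so $E$ is a line field. We are therefore in case (1) of the filtration list, with the trivial filtration $E^{-1}=E$. Then $E^0=\{X\in E\mid [X,F]\subset E+F\}$.

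The main step is to show that $E^0=0$ generically. Since $E$ has rank $1$, $E^0$ is either $0$ or all of $E$ at each point, so it suffices to rule out $E^0=E$ on an open set. Suppose $E^0=E$ on an open set $U$. Let $X$ span $E$ there. Then $[X,F]\subset E+F$. By the integrability of $F$ and the Jacobi identity, $E+F$ is then involutive on $U$. This is because $[X,fX]\in E$ and $[F,F]\subset F$, so the only brackets to check are the mixed ones, which are exactly $[X,F]\subset E+F$. An involutive distribution is not bracket-generating. Since $\mathcal A=E$, this contradicts the hypothesis that $F+\mathcal A$ is bracket-generating. Hence on the open dense subset of $\mathcal R_D^{\mathrm{reg}}$ where $\dim E^0$ is locally constant, $E^0=0$. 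This is the step I expect to need the most care: the corollary says $E^0=0$ holds ``automatically'', but my argument extracts it from bracket-generation together with $\operatorname{rank}E=1$, and I must use the Freeman filtration definition \eqref{E_filt_down} exactly.

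With $E^0=0$ on an open set and $E+F=\mathcal A+F$ bracket-generating, Corollary~\ref{cor_control} (or directly Theorem~\ref{thm_control}) shows that $D$ is of finite type. For the real-analytic statement, I use the remark preceding Corollary~\ref{cor_control}: by Chow--Rashevskii in the analytic setting, controllability by (regular) abnormal extremal trajectories is equivalent to $F+\mathcal A$ being bracket-generating. This reduces the second sentence to the first. The argument has to work on the regular locus, where abnormal extremals of minimal order live.
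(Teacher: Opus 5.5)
Your proposal is correct and follows the paper's route: Lemma~\ref{AE_lemma} gives $\mathcal A=E$ of rank one on the regular locus, hence $E^0=0$, and then Corollary~\ref{cor_control} finishes the proof, with the Chow--Rashevskii remark handling the analytic statement. Your argument that $E^0=E$ on an open set would make $E+F$ involutive is exactly the justification behind the paper's word ``automatically''; it genuinely needs the bracket-generating hypothesis, since Goursat distributions have $m_D=1$ but $E^0=E$.
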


The question of how to refine or remove the assumption $E^0=0$ in Theorem \ref{thm_control} and Corollary \ref{cor_control} remains open. This assumption is crucial in Example \ref{osculation}, as without it, the weight shift in \eqref{extended_filt} would need to affect not only the filtration of $F$ but also the filtration of $E$, and the latter cannot be done without failing to define a filtered structure.

\subsection{Discussion}

\subsubsection{Examples of distributions of finite type which are not controllable by abnormal extremals} A classical example of a vector distribution of infinite type not controllable via abnormal extremals is a Goursat distribution, i.e., rank 2 distributions with the big growth vector with constant jump of dimensions equal to $1$. In this case, all abnormal extremal trajectories are integral curves of the characteristic distribution and thus form a foliation. On the other hand, there are known examples of vector distributions of finite type, which are at the same time not controllable via abnormal extremals. For example, any Cartan prolongation of an arbitrary rank 2 distribution \cite{Doubrov_Day_Zelenko}  with weak derived flag of dimensions $(2,3,5,\dots)$ would fall in this class~\cite[Theorem 1.3]{Day_Zelenko}, \cite[subsection 7.1]{BI09}, \cite[subsection 5.1]{BI06}.  This indicates that controllability by abnormal extremals is not a necessary condition for finiteness of type.    

\subsubsection{Complexification when abnormal extemals do not exist}
In the case of even $\mathrm{rank}\,D$, the set $W_D$ (as defined in the second line of \eqref{tildeWDdescription}) may be trivial, i.e., equal to the zero section of $T^*M$ (equivalently, it may be empty when passing to the projectivized cotangent bundle), as seen in the case of fat distributions (\cite{montbook}). However, the results of this section remain applicable to such distributions via appropriate complexification, at least within the real analytic category. 

Briefly (see \cite{jmz21} for more details about this complexification), starting with a real analytic manifold $M$, we can locally consider a complex manifold $\mathbb{C}M$ (the complexification of $M$) by extending the real analytic transition maps to holomorphic functions. We then extend the real analytic distribution $D$ locally to its complex counterpart. Similarly, we consider the complex cotangent bundle $T^*\mathbb{C}M$, where the fibers are complex vector spaces. By applying the constructions from \eqref{tildeWDdescription} to this complex bundle, we define complex abnormal extremals and controllability (on $\mathbb{C}M$) by complex abnormal extremal trajectories. With these constructions, Theorem~\ref{thm_control}, Corollary~\ref{cor_control}, and Corollary~\ref{cor_min_rank} remain valid, provided all real objects are replaced by their complex analogues.

\subsubsection{Overview of previous results on distributions of maximal class}  The results of this section establish only the finiteness of type of distributions in terms of the pseudo-product structure $(E, F)$  on $W_D$, but do not provide more explicit information about the generalized Tanaka prolongations of the symbols of this pseudo-product structure. In our previous works \cite{BI06, BI09, BI08, BI16, BI25}, such information was already obtained for more restrictive classes of distributions, known as distributions of maximal class, and under the assumption that $m_D=1$. 

Let $\mathcal J$ be the lift of the distribution  $D$ to $W_D$, i.e. $J=\pi^*D$, or in more details 
\begin{equation}
\label{distrlift}
J(\lambda):=\{v\in T_\lambda W_D: d\pi_{\lambda}\in D\bigl(\pi(\lambda)\bigr)\}.
\end{equation}
Following \cite{BI16}, a distribution $D$ is said to be of \textit{maximal class} at a point $q \in M$ if, for some (and therefore for a generic) point $\lambda$ in the fiber of $W_D$ over $q$, there exists an integer $c$ such that:
\begin{equation}
\label{max_class_rel}
(\mathrm{ad}\,\mathcal{A})^c\, \mathcal J(\lambda) = \ker s|_{W_D} (\lambda).
\end{equation}

Here, $s|_{W_D}$ is the restriction of the tautological Liouville $1$-form $s$ to $W_D$, and $(\mathrm{ad}\,\mathcal{A})^c\,\mathcal J$ denotes the span of all iterative Lie brackets of $c$ (or, equivalently, up to $c$) sections of $\mathcal{A}$ with one section of $\mathcal J$.  Note that since $\mathcal{A} \subset E$ and  $E$ belongs to the  Cauchy characteristic distribution of $s|_{W_D}$ \footnote{In fact, this Cauchy characteristic distribution  of $s|_{W_D}$ is spanned by $E$ and the Euler field, the generator of homotheties in the fibers of $W_ D$.}, the left-hand side of \eqref{max_class_rel} is always contained in the right-hand side. 
In control-theoretic language, in the case of $m_D=1$,  $D$ is of maximal class at $q$ if and only if generic regular abnormal trajectory has corank $1$ \cite{AS1996, Day_Zelenko}. \footnote{Abnormal extremal trajectories are equivalently defined as critical points of the endpoint map on the space of all horizontal curves starting from a given point, and their coranks are defined as the codimension of the image of the differential of the endpoint map at them \cite{AS1996, ABB}, so that corank $1$ is the minimal possible corank for a critical point.}

Finiteness of type of distribution of maximal class is ensured under the additional assumption that we call \emph{strong maximality of class}: A distribution $D$ is said to be of \textit{strongly maximal class} at a point $q \in M$ if, for some (and therefore for a generic) point $\lambda$ in the fiber of $W_D$ over $q$, there exists an integer $c$ such that:
\begin{equation}
\label{max_class_rel_strong}
(\mathrm{ad}\,\mathcal{A})^c\, F(\lambda) = \ker s|_{W_D} (\lambda).
\end{equation}
By the same arguments as before the left-hand side of \eqref{max_class_rel_strong} is always contained in the right-hand side.

It is clear that for distributions of strongly maximal class, the sum $E + F$ is bracket-generating. Consequently, Corollary \ref{cor_min_rank} implies that the distribution $D$ of strongly maximal class with $m_D=1$ is of finite type. 
Note that the condition  $m_D=1$ automatically holds for bracket generating distributions of rank $2$ and $3$, to which \cite{BI06, BI09, BI08, BI25} were devoted, and it was also assumed in \cite{BI16}).

We have already explicitly described the maximally symmetric distributions of strongly maximal class and their infinitesimal symmetry algebras for rank $2$ distributions in \cite{BI06, BI09} and for rank $3$ distributions in \cite{BI08, BI25}. Moreover, we found rather explicit description of upper bounds for the symmetry dimensions of distributions of strongly maximal class of arbitrary rank with $m_D=1$ in \cite{BI16}. \footnote{The term of a distribution of strongly maximal class is not used in \cite{BI16}, but in the statements about finite type there we implicitly used strong maximality of class condition in terms of the form of the skew Young diagrams of the Jacobi symbol of the distribution.} Besides, as shown in \cite{Day_Zelenko}, all rank $2$ distributions that are not Goursat are essentially (i.e., up to Cartan deprolongations) of strongly maximal class \footnote{For rank 2 distributions the notion of strongly maximal class and maximal class coincide.}; thus, their finiteness of type and even the explicit description of their maximal infinitesimal symmetry algebras follow from the mentioned previous works. 

In contrast, rank $3$ distributions of non-maximal class that are still controllable by regular abnormal extremal trajectories were found recently in \cite{Doubrov_Day_Zelenko_26}, so Corollary \ref{cor_min_rank} provides a new finiteness of type result for them. Finally, in the case of rank $4$ distributions of maximal class with $m_D>1$, a more explicit description of maximal symmetry algebras will be given in \cite{Keeler_Zelenko}.

\section{Application to ODEs of mixed order}
In this section, we shall fix a natural number $n\ge 2$ and will use multi-indexes of length $n$ consisting of non-negative integers. For two such multi-indexes $\kappa=(k_1,\dots,k_n)$ and $\lambda=(l_1,\dots,l_n)$ we shall say that $\kappa\ge \lambda$ if $k_i\ge l_i$ for all $i=1,\dots,n$. We shall also perform arithmetic operations with these multi-indexes treating them as vectors. The case of $n=2$ (systems of two ODEs of mixed order) was treated in~\cite{mixed-order}.

\subsection{Trivial systems}
A trivial system of ODEs of mixed order $\kappa=(k_1,\dots,k_n)$, $n\ge 2$, is defined as a system of ordinary differential equations 
\[
(y^i)^{(k_i)}=0,\quad 1\le 1\le n
\]
on unknown functions $y^i(x)$, $i=1,\dots,n$. Let $J^\kappa=J^\kappa(\R,\R^n)$ be the jet space of mixed order. Denote by $y^i_j$, $1\le i\le n$, $1\le j\le k_i$ its jet coordinates. So this system can be interpreted as a submanifold $\E\subset J^\kappa$ given by equations $\{y^i_{k_i}=0\}$, $i=1,\dots,n$. 

The geometry of the jet space $J^\kappa$ is encoded via its standard distribution given by 
\[
\{dy_j^i-y_{j+1}^i dx=0: 1\leq i\leq n, 0\leq j\leq k_i-1\}.
\]

Define a 1-dimensional vector distribution $E$ on $\E$ as the restriction of this standard distribution to $\E$. Explicitly, in case of a trivial system of mixed order the distribution $E$ has the form:
\[
E = \left\langle \dd{x}+y^1_1\dd{y^1_0} + \dots + y^1_{k_1-1} \dd{y^1_{k_1-2}} +\quad \dots \quad +
y^n_1\dd{y^n_0} + \dots + y^n_{k_n-1} \dd{y^n_{k_n-2}} \right\rangle. 
\]

Systems of ODEs of uniform order are typically viewed under so-called point or contact transformations. The first ones are lifts from $J^0(\R,\R^n)=\R\times \R^n$, while the second ones make sense only for $n=1$ and are lifts from $J^1(\R,\R)$.

In case of jets of mixed order there are more exotic choices, essentially coming from the lifts of transformations from $J^{\kappa-\lambda}$, where $\lambda=(l_1,\dots,l_n)$ is a multi-index smaller or equal to $\kappa$ (i.e. $l_i\le k_i$ for all $i=1,\dots,n$). This motivates the following definition.
\begin{df} Given a pair of multi-indexes $\lambda\le \kappa$ of length $n$, an \emph{EDS of order $\kappa$ and type $\lambda$} is associated with a system of equations $(y^i)^{(k_i)}=0$ and is defined as a generalized pseudo-product structure on $\E=\{y^1_{k_1}=\dots=y^n_{k_n}=0\}\subset J^\kappa$ given by the above 1-dimensional vector distributions $E$ and a distribution $F$ tangent to the fibers of the projection $\E\to J^{\kappa-\lambda}$.
\end{df}

Explicitly, 
\[
F = \left\langle \dd{y^1_{k_1-l_1+1}}, \dots, \dd{y^1_{k_1-1}},\quad \dots,\quad \dd{y^n_{k_n-l_n+1}}, \dots, \dd{y^n_{k_n-1}}\right\rangle.
\]

Although the definition makes sense for arbitrary values of $\lambda\le \kappa$, we shall be mainly interested in non-trivial cases, when $\lambda\ge \mathbf{2}$. Here for any positive integer $r$ the boldface $\mathbf{r}$ will denote the multi-index $(r,r,\dots,r)$. 

There is a graphical way to encode an arbitrary EDS of type $(\kappa,\lambda)$ using a skew Young tableau (i.e., the tableau does not need to be aligned to the left). Namely, the tableau consists of $n$ rows with $k_1,\dots,k_n$ boxes respectively, the rows are initially left aligned and then the $i$-th row is shifted further left by $l_i$ boxes.

It is possible to generalize the classical Lie--Backlund theorem to the case of mixed jet spaces as follows. 
\begin{lem} Assume that $\lambda\ge \mathbf{2}$ and $\kappa-\lambda\ge \textbf{1}$. Then any (local) transformation of  $J^\kappa=J^\kappa(\R,\R^n)$, $n\ge 2$ preserving the standard distribution and the fibers of the projection $J^\kappa\to J^{\kappa-\lambda}$ also preserves fibers of the projection $J^\kappa\to J^{\kappa-\lambda-\mathbf{1}}$.
\end{lem}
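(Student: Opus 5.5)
The plan is to describe the fibers of $J^\kappa\to J^{\kappa-\lambda-\mathbf 1}$ intrinsically in terms of the standard distribution $C$ on $J^\kappa$ and the vertical distribution $V_\lambda$ of $J^\kappa\to J^{\kappa-\lambda}$. Any transformation preserving $C$ and $V_\lambda$ then automatically preserves $V_{\lambda+\mathbf 1}$, the vertical distribution of $J^\kappa\to J^{\kappa-\lambda-\mathbf 1}$.

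\emph{Local frames.} In jet coordinates write $C=\langle D\rangle\oplus V_{\mathbf 1}$, where $D=\dd{x}+\sum_{i,j<k_i} y^i_{j+1}\dd{y^i_j}$ is the total derivative and $V_{\mathbf 1}=\langle \dd{y^i_{k_i}}\rangle$. For any multi-index $\rho\le\kappa$ set $V_\rho=\langle \dd{y^i_j}: j>k_i-\rho_i\rangle$. Since $\lambda\ge \mathbf 2$, we have $V_{\mathbf 1}\subset V_\lambda$.

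\emph{Step 1: identify the auxiliary distribution.} Set $W:=V_\lambda+[C,V_\lambda]$, which is intrinsic. The basic relation is $[\dd{y^i_j},D]=\dd{y^i_{j-1}}$, and the vertical fields commute among themselves. From these one gets
\[
W=V_{\lambda+\mathbf 1}\oplus\langle D\rangle .
\]
Here $\kappa-\lambda\ge\mathbf 1$ guarantees that every index $k_i-l_i$ is a genuine jet coordinate, so $V_{\lambda+\mathbf 1}\ne V_\lambda$ and the new directions $\dd{y^i_{k_i-l_i}}$ really appear. In particular $V_{\lambda+\mathbf 1}$ is an integrable subdistribution of corank $1$ in $W$.

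\emph{Step 2: show $V_{\lambda+\mathbf 1}$ is the unique integrable corank-one subdistribution of $W$.} This is the key step. Suppose $S\subset W$ is integrable of corank $1$ and $S\ne V_{\lambda+\mathbf 1}$. Then at generic points $S$ contains a field $X=fD+v$ with $f\ne0$ and $v\in V_{\lambda+\mathbf 1}$. Moreover, $S\cap V_{\lambda+\mathbf 1}$ has corank at most $1$ in $V_{\lambda+\mathbf 1}$.

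Now look at the $n$-dimensional ``top new level'' $T=\langle \dd{y^1_{k_1-l_1}},\dots,\dd{y^n_{k_n-l_n}}\rangle$, and consider vector fields in $S\cap V_{\lambda+\mathbf 1}$ whose class modulo $V_\lambda$ lies in $T$. Since $n\ge2$, these classes fill a subspace of $T$ of dimension at least $n-1\ge1$, so we can pick such a field $Y$ with nonzero class in $T$.

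Compute $[X,Y]$ modulo $W$. The part $[v,Y]$ is vertical, and the derivatives of coefficients only contribute terms in $W$. The remaining term $f[D,Y]$ has a nonzero component in $\langle \dd{y^i_{k_i-l_i-1}}\rangle$, which is transversal to $W$ and exists because $k_i-l_i\ge1$. Hence $[X,Y]\notin W\supset S$, contradicting the integrability of $S$. Therefore $S=V_{\lambda+\mathbf 1}$.

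\emph{Step 3: conclude.} A local transformation $\Phi$ preserving $C$ and $V_\lambda$ preserves $W$. It also maps integrable corank-one subdistributions of $W$ to integrable corank-one subdistributions of $W$. By Step 2, $\Phi_*V_{\lambda+\mathbf 1}=V_{\lambda+\mathbf 1}$, so $\Phi$ preserves the fibers of $J^\kappa\to J^{\kappa-\lambda-\mathbf 1}$.

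\emph{Main obstacle.} I expect Step 2 to be the delicate point. One has to control the coefficient-derivative terms when computing brackets modulo $W$, and check that the projection of $[D,\cdot]$ from $T$ to $TJ^\kappa/W$ is injective. The hypothesis $n\ge2$ is exactly what makes the hyperplane intersection with $T$ nonzero; for $n=1$ the statement would fail, in line with contact transformations.
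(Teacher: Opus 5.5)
Your proof is correct, and it takes a genuinely different route from the paper's. In fact the paper's one-line argument does not prove the stated direction.

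The paper takes $V'=\{X\in V\mid [C,X]\subset V\}$. The condition has to be read modulo $C$, since $[hD,X]$ contains $-X(h)D$. With that reading, the relation $[D,\partial_{y^i_j}]=-\partial_{y^i_{j-1}}$ gives $V'=V_{\lambda-\mathbf 1}=\langle \partial_{y^i_j} : j>k_i-l_i+1\rangle$. This is the vertical distribution of $J^\kappa\to J^{\kappa-\lambda+\mathbf 1}$. It lies inside $V_\lambda\subset V_{\lambda+\mathbf 1}$, so it is "tangent to" the fibers of $J^\kappa\to J^{\kappa-\lambda-\mathbf 1}$, but it is not their tangent distribution.

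So the paper's construction shrinks $V$ and never uses $n\ge 2$. What it actually proves is the easy converse: symmetries of $(C,V_{\lambda+\mathbf 1})$ preserve $V_\lambda$. It cannot give the stated direction, which is false for $n=1$. For example, on $J^3(\R,\R)$ with $\lambda=2$, contact transformations preserve the fibers over $J^1$ but not those over $J^0$.

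You go the other way. You enlarge $V_\lambda$ to $W=V_\lambda+[C,V_\lambda]=\langle D\rangle\oplus V_{\lambda+\mathbf 1}$. You then remove $D$ by showing that $V_{\lambda+\mathbf 1}$ is the only integrable corank-one subdistribution of $W$.

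\begin{itemize}
\item The count $n-1\ge 1$ is exactly where $n\ge 2$ enters.
\item Your formula $[fD+v,Y]\equiv -f\sum_i c_i\,\partial_{y^i_{k_i-l_i-1}} \pmod W$ already settles the "obstacle" you anticipate. Just work at a point of the open set where $S\ne V_{\lambda+\mathbf 1}$ and take local sections there.
\end{itemize}

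Your argument and the paper's construction together give both inclusions. That is what the paper's later remark needs, namely that $(\kappa,\lambda)$ and $(\kappa,\lambda+\mathbf r)$ have the same symmetry algebras.

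One small correction to Step 1: $y^i_{k_i-l_i}$ exists as soon as $\lambda\le\kappa$. The hypothesis $\kappa-\lambda\ge\mathbf 1$ is really used in Step 2, to ensure that $\partial_{y^i_{k_i-l_i-1}}$ exists in every row. For this direction, $\lambda\ge\mathbf 1$ would suffice.
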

\begin{proof}
	Indeed, let $C$ be the standard contact distribution and $V$ the completely integrable distribution defined as $\ker \pi_*$ for $\pi\colon J^\kappa\to J^{\kappa-\lambda}$. Then under the conditions of the lemma the distribution 
	\[
	V' = \{X\in V \mid [C,X]\subset V\}
	\]
	is tangent to fibers of the projection $J^\kappa\to J^{\kappa-\lambda-\mathbf{1}}$ and is preserved under all local transformations preserving $C$ and $V$.
\end{proof}

Thus, the skew Young tableaux corresponding to $(\kappa,\lambda)$ and $(\kappa,\lambda+\mathbf{r})$ for all positive $r$ such that $\kappa\ge \lambda+\mathbf{r}$ define EDSs which are essentially equivalent from the geometric point of view, and, for example, have the same symmetry algebras.  
Such graphical notation provides an easy way to describe the symbol of the pseudo-product structure defined by the pair $(E,F)$. Namely, introduce a negatively graded vector space $V$ of dimension equal to the number of boxes in the Young tableau as follows. Each box in the tableau corresponds to a basis element in $V$. The grading is defined in such way that it decreases by $1$ from left to right starting from $-1$ with basis elements in the same column having the same degree. Next, define an operator $X\in \End(V)$ of degree $-1$ which maps each basis element to another basis element corresponding to the box on the right, or to $0$, if there is no right neighbor. The corresponding graded nilpotent Lie algebra $\m$ is defined as $\R X\oplus V$. 

The subalgebras $\eg$ and $\fg$ that define the symbol of the corresponding pseudo-product structure are defined as follows. We have $\eg=\langle X \rangle$, and $\fg$ is a graded subspace in $V$ (viewed as an abelian subalgebra) that corresponds to all boxes in the left-most \emph{complete} column (i.e. the column that contains boxes in each row) and all boxes to the left.

Applying Theorem~\ref{thm-pp-finite} we immediately arrive at the following finiteness result.
\begin{prop}
	Assume that $\textbf{2}\le \lambda \le \kappa$. Then the generalized Tanaka prolongation $\g(\m;\eg,\fg)$ is finite-dimensional. 
\end{prop}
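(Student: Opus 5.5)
The plan is to reduce the statement to Theorem~\ref{thm-pp-finite}: it suffices to check that $(\m;\eg,\fg)$ is the symbol of a non-degenerate generalized pseudo-product structure. Concretely, I will verify three things: (i) $\eg$ and $\fg$ are graded subalgebras with trivial intersection whose sum generates $\m$; (ii) the pairing $\eg\times\fg\to\m$, $(x,y)\mapsto[x,y]$, has trivial left and right kernels; (iii) nothing else in the proof of Theorem~\ref{thm-pp-finite} is used. On (iii): that proof only uses the generation property, the splitting $V=\bar\eg\oplus\bar\fg$, and the non-degeneracy of the pairing. So once (i) and (ii) hold for the tableau data, the argument via Theorem~\ref{H(E)_in_A_thm} and the Spencer rank-one criterion applies verbatim.

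For (i), $\eg=\langle X\rangle$ is one-dimensional, hence a subalgebra, and $\fg\subset V$ is abelian because $V$ is an abelian ideal. They meet trivially since $X\notin V$. For generation, note that $\fg$ contains, for every row, all boxes weakly left of the left-most complete column. Every box of a row lies weakly right of that row's first box, which belongs to $\fg$ because row $i$ is shifted left by $l_i$. Hence every basis vector of $V$ is obtained as $(\operatorname{ad}X)^s f$ for some basis vector $f\in\fg$, and $\eg+\fg$ generates $\m$.

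For (ii), take the left kernel first. A nonzero element of $\eg$ is $cX$ with $c\neq0$. We need some box in $\fg$ with a right neighbour, and this is where $\lambda\ge\mathbf 2$ enters: each row is shifted by at least two relative to the unshifted alignment, so the left-most complete column together with what precedes it contains at least one box having a right neighbour. More simply, $X$ acts nontrivially on the left-most complete column as long as some row continues past it, which holds since $\kappa\ge\lambda$ with $\lambda\ge\mathbf 2$ and the rows have length $k_i\ge 2$. Thus $[X,\fg]\neq0$.

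For the right kernel we need $[X,f]=Xf\neq0$ for every nonzero $f\in\fg$, i.e.\ $X|_{\fg}$ must be injective. Since $X$ maps distinct boxes to distinct boxes, or to $0$, injectivity amounts to saying that no box of $\fg$ is the last box of its row. I expect this to be the main obstacle, since it requires pinning down exactly which boxes lie in $\fg$. By construction, every row passes through the left-most complete column, and the boxes of $\fg$ in row $i$ end at that column. So I must show that every row extends at least one box beyond the left-most complete column. I will argue this from the shift conventions: the left-most complete column sits at the position of the least-shifted row's first box, and the hypothesis $\lambda\ge\mathbf 2$ together with $\kappa\ge\lambda$ should force each row to continue to its right. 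If that reading of the tableau turns out to be off by one, the fallback is to use the preceding Lemma to replace $\lambda$ by $\lambda-\mathbf 1$ or $\lambda+\mathbf r$ without changing the geometry, choosing the normalization in which the last boxes of the rows are clearly outside $\fg$.

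Once injectivity holds, both kernels vanish, and Theorem~\ref{thm-pp-finite} gives $\dim\g(\m;\eg,\fg)<\infty$.
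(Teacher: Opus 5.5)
Your proposal follows the paper's proof. You reduce to Theorem~\ref{thm-pp-finite}. You get generation from the fact that $\fg$ contains the first box of every row. You get non-degeneracy from the fact that no last box of a row lies in $\fg$, so $X|_{\fg}$ is injective and, since $\fg\neq 0$, also $[X,\fg]\neq 0$.

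The step you leave as ``should force'' closes in one line, and your reading of the tableau is correct. In unshifted coordinates row $i$ occupies columns $1-l_i,\dots,k_i-l_i$. So the left-most complete column is $1-\min_j l_j\le -1$. Every row contains column $0$ because $1\le l_i\le k_i$, so every last box lies strictly to the right of that column.

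The fallback via the Lemma is unnecessary. It also could not repair an off-by-one: a uniform shift $\lambda\mapsto\lambda+\mathbf{r}$ only translates the tableau and leaves $(\m;\eg,\fg)$ unchanged.
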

\begin{proof}
	By construction the subalgebra $\fg$ contains elements corresponding to the left-most box in each row of the Young tableau. This ensures that the subspace $\eg+\fg$ generates $\m$. The condition $\textbf{2}\le \lambda \le \kappa$ of the Proposition is equivalent that any two rows of the corresponding Young diagram intersect vertically by at least two boxes. This implies that the element corresponding to the right-most box in each row does not belong to $\fg$. This ensures that the pair $(\eg,\fg)$ is non-degenerate. 
\end{proof}

For example the diagram corresponding to an EDS associated with a system of order $\kappa=(4,3,2)$ and type $\lambda=(4,2,2)$ has the form:
\[
\begin{ytableau}
	*(green)-1 & *(green) -2 & *(green) -3 & -4 \\
	\none & \none & *(green) -3 & -4 & -5\\
	\none & \none & *(green) -3 & -4
\end{ytableau}
\]

It corresponds (local) transformations of $J^{(4,3,2)}$ preserving the standard distribution and the fibers of the projection $J^{(4,3,2)}\to J^{(0,1,0)}$. 
The numbers in diagram boxes specify degrees of the corresponding basis elements in $V$. And the boxes that correspond to the subalegbra $\fg$ are colored green.  

In more detail, the symbol $(\m,\eg,\fg)$ is defined as a graded nilpotent Lie algebra $\m$ with basis 
\[
\{X,\quad e_{1,i},0\le i\le 3;\quad e_{2,j}, 0\le j\le 2;\quad e_{3,k}, 0\le k\le 1\},
\]
non-zero bracket relations $[X,e_{i,j}]=e_{i,j+1}$ whenever both parts of the equality are defined, and the grading:
\[
\deg X = -1,\quad \deg e_{1,i}=-i-1,\quad \deg e_{2,j}=-j-3, \quad \deg e_{3,k}=-k-3. 
\]

The subalgebras $\eg$ and $\fg$ are given by:
\[
\eg =\langle X \rangle, \quad \fg=\langle e_{1,0}, e_{1,1}, e_{1,2}, e_{2,0}, e_{3,0}\rangle .
\]

We summarize all possible non-equivalent choices of $\lambda$ for this case of $\kappa=(4,3,2)$ in Table~\ref{t1}. In agreement with Theorem~\ref{thm-pp-finite}, all corresponding generalized Tanaka prolongations are finite-dimensional\footnote{The universal Tanaka prolongations were computed using \href{https://www.realandimaginary.com/dgcv/}{Differential Geometry with Complex Variables (dgcv)} software package.}, but have different dimension depending on $\lambda$. It is easy to see from the coordinate description of the pseudo-product structures that correspond to the trivial systems of ODEs that they are \emph{flat} models with prescribed symbol $(\m,\eg,\fg)$. So, the dimensions of the prolongations coincide in these cases with the dimensions of infinitesimal symmetry algebras. 

\begin{table}
\ytableausetup{smalltableaux}
\[
\begin{array}{|c|c|c|c|}
\hline
\lambda & \text{Skew Young tableau} & \text{Preserved projection} & \text{Dim of the prolongation} \\
\hline
& & & \\
(2,2,2) & \ydiagram{4,3,2} & \quad J^{(4,3,2)}\mapsto J^{(2,1,0)} & \quad 22 \\[10mm]
(2,3,2) & \ydiagram{1+4,3,1+2}& \quad J^{(4,3,2)}\mapsto J^{(2,0,0)} & \quad 36 \\[10mm]
(3,2,2) & \ydiagram{4,1+3,1+2} & \quad J^{(4,3,2)}\mapsto J^{(1,1,0)} & \quad 19 \\[10mm]
(3,3,2) & \ydiagram{4,3,1+2} & \quad J^{(4,3,2)}\mapsto J^{(1,0,0)} & \quad 20 \\[10mm]
(4,2,2) & \ydiagram{4,2+3,2+2} & \quad J^{(4,3,2)}\mapsto J^{(0,1,0)} & \quad 51 \\[10mm]
(4,3,2) & \ydiagram{4,1+3,2+2} & \quad J^{(4,3,2)}\mapsto J^{(0,0,0)} & \quad 29 \\[10mm]
\hline
\end{array}
\]
\caption{Dimensions of generalized Tanaka prolongations for $\kappa=(4,3,2)$}\label{t1}
\end{table}

\subsection{Non-linear systems}
More generally, a non-linear system of ODEs of mixed order $\kappa$ can be defined as a condimension $n$ submanifold $\E\subset J^\kappa$ transversal to the fibers of the projection $J^\kappa\to J^{\kappa-\mathbf{1}}$. Explicitly, it can be written as:
\begin{equation}\label{eq:nlin}
(y^i)^{(k_i)}= F^i\left(x,(y^r)^{(s)}\right),\quad 1\le i,r \le n, 1\le s < k_s. 
\end{equation} 

Restricting the standard distribution to $\E$ we get a 1-dimensional distribution $E$, whose integral curves are solutions of this system of ODEs. Further, we can define $F$ as a \emph{vertical} distribution tangent to fibers of the projection $\pi\colon J^{\kappa}\to J^{\kappa-\lambda}$. As in Example~\ref{osculation} we can always define a pseudo-product structure starting from the pair $(E,F)$. In case of a trivial system of ODEs this is exactly the flat model for the symbol $(\m;\fg,\eg)$ discussed above. However, in case of an arbitrary non-linear system the corresponding symbol may differ from $(\m;\fg,\eg)$, and we need to impose additional conditions on the right hand side of the system depending on $\kappa$ and $\lambda$.

These conditions can be also seen from the corresponding skew Young tableau. Namely, complete it to a regular (non-skew) one by adding necessary boxes on the left of each row as show, for example, below:
\[
\ytableausetup{nosmalltableaux,boxsize=2.5em}
\begin{ytableau}
	\partial_{y^1_3} & \partial_{y^1_2} & \partial_{y^1_1} &  \partial_{y^1_0} \\
	\bullet & \bullet & \partial_{y^2_2} & \partial_{y^2_1} & \partial_{y^2_0} \\
	\bullet & \bullet & \partial_{y^3_1} & \partial_{y^3_0}
\end{ytableau}
\]
Each newly added box in a row $i$ corresponds to the conditions
\[
\frac{\partial F^i}{\partial y^r_s}=0
\] 
on the right hand side of~\eqref{eq:nlin}, where operators $\partial_{y^r_s}$ correspond to all boxes in the same column that were already present in the initial Young tableau. For example, in case of the above tableau these conditions restrict~\eqref{eq:nlin} to a subclass where $F^2$ and $F^3$ do not depend on $y^1_2$ and $y^1_3$.    

The existence of a natural frame bundle and an absolute parallelism for non-linear systems follows from Theorem~\ref{Finite_Dim_symmetry_Thm}. In particular, the equivalence problem for ODEs of mixed order under conditions $\textbf{2}\le \lambda\le \kappa$ is reduced to the equivalence problem of the corresponding absolute parallelisms.

\section*{Acknowledgements}
We would like to express our gratitude to  Andrei Agrachev, Jun-Muk Hwang, Tohru Morimoto, and David Sykes for fruitful discussions on the topic of this paper. Special thanks to Tohru Morimoto for pointing us to the preprint of Isao Naruki~\cite{naruki} and to David Sykes for guiding us through his software package \href{https://www.realandimaginary.com/dgcv/}{Differential Geometry with Complex Variables (dgcv)}.

\end{document}